\documentclass[reqno,11pt,a4paper]{amsart}
\usepackage{amsmath,amssymb,latexsym}
\usepackage{enumerate}
\usepackage[all]{xy}
\usepackage{fancyhdr}
\pagestyle{fancy}
\usepackage{color}

\linespread{1.1}
\textwidth = 380pt

\usepackage[english]{babel}

\newcommand{\Q}{\mbox{$\mathbb Q$}}

\newcommand{\n}{\mbox{$\mathcal N$}}

\newcommand{\lrta}{\longrightarrow}

\newcommand{\A}{\mathcal{A}}

\newcommand{\C}{\mathcal{C}}
\newcommand{\J}{\mathcal{J}}

\newcommand{\K}{K_\infty}

\newcommand{\QQ}{\mathbb{Q}}
\newcommand{\ZZ}{\mathbb{Z}}

\newcommand{\sel}{\mathrm{Sel}}
\newcommand{\Vis}{\mathrm{Vis}}
\newcommand{\gal}{\mathrm{Gal}}

\newcommand{\ilim}{\varinjlim}




\usepackage[OT2,OT1]{fontenc}
\newcommand\cyr{%
\renewcommand\rmdefault{wncyr}%
\renewcommand\sfdefault{wncyss}%
\renewcommand\encodingdefault{OT2}%
\normalfont\selectfont}

\DeclareTextFontCommand{\textcyr}{\cyr}
\newcommand{\Sha}{{\mbox{\textcyr{Sh}}}}

\newtheorem{theorem}{Theorem}[section]

\newtheorem{lemma}[theorem]{Lemma}
\newtheorem{corollary}[theorem]{Corollary}

\newtheorem{assumption}{Assumption}

\newtheorem{example}{Example}
\newtheorem{remark}[theorem]{Remark}

\newcommand{\Keywords}[1]{\par\noindent
{\small{Keywords and phrases}: #1}}

\newcommand{\AMS}[1]{\par\noindent
{\small{AMS Subject Classification}: #1}}

\author{Sudhanshu Shekhar}
\address{Indian Institute of Science Education and Research, Mohali}
\email[]{sshekhars2012@gmail.com}

\setlength{\headsep}{25pt}

\lhead{}
\lfoot{}
\chead{\scriptsize{\uppercase{}}}
\cfoot{\thepage}
\rhead{}
\rfoot{}

\begin{document}

\title{Visibility of Shafarevich-Tate group of abelian varieties over number field extensions. }
\begin{abstract}
Given an abelian variety $J$ and an abelian subvariety $A$ of $J$ over a number field $K$, we study the visible elements of the Shafarevich-Tate group of $A$ with respect to $J$ over certain number field extension $M$ of $K$. The notion of visible elements in Shafarevich-Tate group of an abelian variety was introduced by Mazur. 
In this article, we study the image of Visible elements of $A$ with respect to $J$ under the natural restriction map of the Galois cohomology of $A$ over $K$ to the Galois cohomology of $A$ over $M$.   In particular, for a fixed odd prime $p$, we investigate the conditions under which visible elements of order $p$ can be produced over a quadratic extension  or a degree $p$ extension  $M$ of $K$. 
\end{abstract}

\maketitle
\let\thefootnote\relax\footnotetext{
\AMS{14H52}
\Keywords{Abelian varieties, Shafarevich-Tate group, Mordell-Weil group.}
}

\section{introduction}\label{intro}
Let $L$ be a number field and $\bar{L}$ denote a fixed algebraic closure of $L$. Let $A$ be an abelian variety defined over $L$.  For a number field extension $K/L$,  the Shafarevich-Tate group ${\cyr \Sha}(A/K)$ of $A$ over $K$ is defined by the exact sequence
\begin{equation}\label{sha}
 0 \lrta {\cyr \Sha}(A/K) \lrta H^1(K,A) \stackrel{\prod_w\delta_w}{\lrta} \prod_{w} H^1(K_w,A). 
\end{equation}
Here, $w$ varies over the set of primes of $K$, 
and $\prod\gamma_w$ denotes the natural restriction map of Galois cohomology groups.
The Shafarevich-Tate group ${\cyr \Sha}(A/K)$ of $A$ over $K$ measures the failure of the local to global principal of certain torsors. In his article \cite{Mz}, Mazur introduced the notion of visibility of elements in  ${\cyr \Sha}(A/K)$. Using this one can produce non-trivial elements in ${\cyr \Sha}(A/K)$. Recall that,  for every integer $n>1$, by Kummer theory of abelian varieties, we have an exact sequence
\[   0 \lrta A(K)/n \lrta \sel(A[n]/K)  \lrta   {\cyr \Sha}(A/K)[n] \lrta 0. \]
where $[n]$ denotes the $n$-torsion subgroup, $\sel(A/K)$ denotes the Selmer group of $A$ over $K$ defined by the exact sequence,
\begin{equation}\label{sel}
 0 \lrta \sel(A[n]/K) \lrta H^1(K,A[n]) \stackrel{\prod_w\delta_w}{\lrta} \prod_{w} H^1(K_w,A).
\end{equation}
Here, $A[n]$ is the $n$-torsion subgroup of $A$ over a fixed algebraic closure $\bar{L}$ of $L$.
In particular, the visible elements in ${\cyr \Sha}(A/K)$ helps us in finding much sharper bound on the Mordell-Weil rank of an abelian variety $A$. 

If $i : A  \hookrightarrow J$ is a closed immersion of abelian varieties over $L$ and $K/L$ be a field extension then the subgroup of $H^1(K,A)$ visible with respect to $i$ is
\[ \Vis_J(K,A)= ker (H^1(K,A) \lrta H^1(K,J)) .\]

For an finite extension $K$ of $L$, the visible subgroup of ${\cyr \Sha}(A/K)$ with respect to $i$ is defined as 
\[  \Vis_J({\cyr \Sha}(A/K)) := {\cyr \Sha}(A/K)\cap \Vis_J(K,A) = ker ({\cyr \Sha}(A/K)\lrta {\cyr \Sha}(J/K) )\]

The visible subgroup of the Shafarevich group of an abelian variety has been extensively studied by several mathematicians (see for example \cite{CM}, \cite{AS}, \cite{AS1}).

For  a Galois extension $L$ of $\QQ$ and let  $e_p(L)$ denote the ramification index of $L$ at a prime $p$.  Let $c_{v,A}(L)$ denote the Tamagawa number of an abelian variety $A$ at a finite prime $v$ of $L$.  
We recall  the following important result proved in \cite{AS}.

\begin{theorem}\label{AS}
Let  $A$, $B$ be abelian  sub varieties of an abelian variety $J$  defined over $L$ such that $A\cap B$ is finite and $B[n]\subset A$ as a subgroup scheme of $J$ for  an odd positive integer $n$.  Let $N$ be an integer divisible by the primes of bad reduction of $A$ or  $B$ over $L$. Suppose that  $n$ is coprime to $N$, $B(L)[n]=0$ and $(J/B)(L)[n]=0$. Then we have a natural map  
\[  B(L)/nB(L)  \stackrel{\phi_L}{\longrightarrow}  \Vis_J(A/L)  \]
such that the cardinality of the kernel of $\phi_K$ is bounded by $n^r$ where $r$ is the Mordell-Weil rank of $A$. 
Further, if $e_p(L)< p-1$ for all primes $p|n$ and that  $n$ is coprime to $c_{v,A}(L)$ and $c_{v,B}(L)$ for every prime $v$ of $L$ then $\phi_L$ factors through
\[  B(L)/nB(L)  \stackrel{\phi_L}{\longrightarrow}  \Vis_J({\cyr \Sha}(A/L))  \]
\end{theorem}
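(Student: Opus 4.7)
The plan is to construct $\phi_L$ by composing the Kummer connecting map for $B/L$ with the cohomological functoriality of the inclusion $B[n]\inj A$, and then to analyze the resulting class globally (for visibility and the kernel bound) and place-by-place (for the Shafarevich--Tate refinement). Starting from the Kummer sequence $0\to B[n]\to B\by{n}B\to 0$, the hypothesis $B(L)[n]=0$ makes the connecting homomorphism $\delta_B:B(L)/nB(L)\inj H^1(L,B[n])$ injective. Let $i:B[n]\inj A$ denote the given inclusion and set $\phi_L:=i_*\circ\delta_B$. That $\Img\phi_L\subseteq\Vis_J(A/L)$ follows by post-composing with $H^1(L,A)\to H^1(L,J)$: since $B[n]\inj A\inj J$ and $B[n]\inj B\inj J$ agree as subgroup-scheme inclusions into $J$, the composition factors through $H^1(L,B[n])\to H^1(L,B)$, which kills $\Img\delta_B$ by exactness of the Kummer sequence for $B$.

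For the kernel bound, I would read off $\Ker(i_*)$ from the long exact sequence of $0\to B[n]\to A\by{\pi}\bar A\to 0$, where $\bar A:=A/B[n]$; this kernel is the image of $\bar A(L)/\pi(A(L))$ in $H^1(L,B[n])$. The complementary isogeny $\psi:\bar A\to A$ satisfying $\psi\circ\pi=[n]_A$ has kernel $A[n]/B[n]$. Using $B(L)[n]=0$ and $(J/B)(L)[n]=0$ together with $0\to B\to J\to J/B\to 0$, one deduces $J(L)[n]=0$, hence $A(L)[n]=0$; the tautology $A[n]\cap B=B[n]$ yields $A[n]/B[n]\inj J/B$, so also $(A[n]/B[n])(L)=0$. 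Therefore $\psi$ is injective on $\bar A(L)$ and descends to an injection $\bar A(L)/\pi(A(L))\inj A(L)/nA(L)$, whose target has order exactly $n^r$ because $A(L)[n]=0$. This gives $|\Ker\phi_L|\le n^r$.

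For the $\Sha$ part it suffices, for every place $v$ of $L$, to show that the localization of $\phi_L$ vanishes in $H^1(L_v,A)$. The map $\phi_L$ factors through $H^1(L,A[n])\to H^1(L,A)$ via $B[n]\inj A[n]\inj A$, and $A(L_v)/nA(L_v)\subseteq H^1(L_v,A[n])$ is precisely the kernel of $H^1(L_v,A[n])\to H^1(L_v,A)$. So it is enough to check that the image of $B(L_v)/nB(L_v)$ in $H^1(L_v,A[n])$ always lies in $A(L_v)/nA(L_v)$. At a place $v$ of good reduction for both $A$ and $B$ with $v\nmid n$, the Kummer class is unramified and functoriality maps it into $H^1_{\mathrm{nr}}(L_v,A[n])=A(L_v)/nA(L_v)$. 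At a bad prime $v\mid N$, coprimality of $n$ with $c_{v,A}(L)$ and $c_{v,B}(L)$ lets one reduce to identity components of the N\'eron models, where the unramified argument still applies. At a place $v\mid p$ with $p\mid n$, the ramification bound $e_v(L)\le e_p(L)<p-1$ is exactly what Raynaud-type uniqueness of finite flat prolongations demands, so one can identify the relevant local subgroup of $H^1(L_v,A[n])$ with $A(L_v)/nA(L_v)$ and trace the image there.

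The main obstacle is the local analysis at places $v\mid n$: when $e_v(L)\ge p-1$, the finite subspace of $H^1(L_v,A[n])$ (which in the crystalline range coincides with $A(L_v)/nA(L_v)$) ceases to be cut out by a unique finite flat model, and the pullback along $B[n]\inj A[n]$ can produce classes outside this subspace. Certifying that, in the stated ramification range, the map $H^1(L_v,B[n])\to H^1(L_v,A[n])$ sends local Kummer classes to local Kummer classes, together with the parallel bookkeeping at bad primes using the Tamagawa-number hypotheses, is the technical heart of the passage from $\Vis_J(A/L)$ to $\Vis_J(\Sha(A/L))$.
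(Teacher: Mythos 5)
The paper does not itself prove this theorem: it recalls it verbatim from \cite{AS}, and the nearest thing to a proof is the proof of Theorem \ref{improv}, which restates it over $K/L$ and argues via Lemma \ref{vis} (= \cite[Lemma 3.7]{AS}) for the construction and kernel bound, and via Lemma \ref{smooth1} (surjectivity of $J(K^{ur}_v)\to (J/A)(K^{ur}_v)$ from the N\'eron-model smoothness criterion when $e_p<p-1$) plus a deferred ``argument similar to \cite[Theorem 3.1]{AS}'' for the $\Sha$ refinement. Against that backdrop your proposal is partly a faithful reconstruction and partly a genuinely different strategy.

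The construction of $\phi_L=i_*\circ\delta_B$ and the visibility of its image are correct and amount to the same map as in \cite[Lemma 3.7]{AS}: the cocycle $\sigma\mapsto\sigma y-y$ for $ny=x$, $y\in B(\bar L)$, takes values in $B[n]\subset A$ and dies in $H^1(L,J)$ as the coboundary of $y$. Your kernel bound is a clean, self-contained variant: reading $\Ker(i_*)$ from the long exact sequence for $0\to B[n]\to A\to A/B[n]\to 0$, using the complementary isogeny $\psi$ with $\psi\circ\pi=[n]_A$, and deducing $A(L)[n]=0$ and $(A[n]/B[n])(L)=0$ from the hypotheses on $B$ and $J/B$ — this gives $|\Ker\phi_L|\le n^r$ directly. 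The paper instead records the kernel as a subquotient of $J(L)/(A(L)+B(L))$ and derives $n^r$ from that; the two are equivalent, but your isogeny computation is arguably the more transparent derivation and deserves credit as an independent argument.

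For the $\Sha$ refinement you switch frameworks: rather than working in $H^1(L_v,A)$ and showing that $J(L_v^{ur})\to (J/A)(L_v^{ur})$ is surjective and then killing the resulting unramified class by Tamagawa-number coprimality — which is what \cite{AS} and the paper's Lemma \ref{smooth1}/Theorem \ref{improv} do — you work in $H^1(L_v,A[n])$ and try to show that $i_*$ carries the Kummer local condition of $B$ into the Kummer local condition of $A$. This is a legitimate alternative architecture (a Bloch--Kato/flat-cohomology viewpoint rather than a N\'eron-model viewpoint), and for good primes $v\nmid n$ your unramified-cohomology argument is complete and correct. However, the two hard cases are not actually carried out. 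At bad primes $v\mid N$, ``reduce to identity components of the N\'eron models'' compresses the real content: you need both that $n\nmid c_{v,B}(L)$ makes the Kummer image of $B(L_v)/n$ unramified, and that $n\nmid c_{v,A}(L)$ makes $H^1_{\mathrm{nr}}(L_v,A[n])$ equal to $A(L_v)/nA(L_v)$, and neither is demonstrated. At $v\mid n$ you explicitly concede that verifying $i_*$ sends Kummer classes to Kummer classes in the range $e_p(L)<p-1$ is ``the technical heart'' and leave it open. This is a genuine gap: the whole point of the ramification hypothesis is to make this local step work, and without supplying the Fontaine--Laffaille/Raynaud input (uniqueness of finite flat prolongations and exactness of the crystalline local condition, or equivalently the flat-cohomology identification of $A(L_v)/nA(L_v)$), the passage from $\Vis_J(A/L)$ to $\Vis_J(\Sha(A/L))$ is asserted rather than proved. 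The paper's route through Lemma \ref{smooth1} is the standard way to discharge exactly this obligation; your route plausibly also works but would need those inputs spelled out and cited.
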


In particular, if the Mordell-Weil rank of $A$ is less than the Mordell-Weil rank of $B$ then from the above theorem, we can construct non-trivial elements of the Shafaravich-Tate group of $A$ over $K$. 
Aim of this article is to  study certain visible elements in the Shafarevich Tate group of an abelian variety over number field extensions of $L$  via a relative version of the above theorem. 
Let $K$ be a Galois extension of $L$. For a Galois extension $M/L$ containing $K$, let $\tau_{K}^{M}$ denote the natural restriction map
\[  \Vis_J(K,A)\lrta \Vis_J(M,A)  \]  
induced by the natural restriction map
\[  \tau_K^M : H^1(K,A) \lrta H^1(M,A) .\]

If the Tamagawa number at a prime $v$ is not coprime to $n$ then the map $\phi_K$ may not factor through $\Vis_J({\cyr \Sha}(A/K)$. Nevertheless, the composition of maps $\tau_{K}^{M}\circ \phi_K$ may factor through $\Vis_J({\cyr \Sha}(A/M))$. In this article we study this phenomenon and provide several illustrative examples where the composition $\tau_{K}^{M}\circ \phi_K$ is a non trivial map. 
We prove our result in two mutually exclusive cases. In section \ref{prime}, we consider the case when $n$ is coprime to the degree of $M/K$. 

Let $L/\QQ$ be a Galois extension and fix an odd integer $n$. Let $e_p=e_p(L)$ be the ramification index of $L$  at prime number $p$.  For an abelian variety $A$ defined over a number field $K$, let $N_A(K)$ be the integer defined as the product of  residue characteristics at the primes of bad reductions of $A$ over $K$. 
Let $K$ and $M$  be a finite Galois extensions of $\QQ$ such that $L\subset K\subset M$. 
For a prime $v$ of $K$, let $c_{v,A}(K)$ be the Tamagawa number of $A$ at $v$. Let $A$ and $B$ be abelian subvarieties of an abelian variety $J$ defined over $L$.   
 For a  character  $\chi$ of $\gal(M/K)$ and an abelian variety $A$, let  $A^\chi$ denotes the twist of $A$ by $\chi$.  For the definition and a detailed discussion on the twist of an abelian variety by a finite order character of Galois group, we refer the reader to \cite{Ki}.   Then we have the following theorem. 

\begin{theorem}
Let  $A$ and $B$ are abelian subvarieties of an abelian variety $J$ defined over $L$ such that $A\cap B$ is finite and $B[n] \subset A\cap B$. Further assume that \\
(i) $B(M)[n]=0$ and $(J/B)(M)[n]=0$. \\
(ii) $n$ is coprime  to $\prod_v c_{v,A^\chi}(K)c_{v,B^\chi}(K)$ where $v$ varies over the primes of bad reduction of $A^\chi$ or $B^\chi$.\\
(iii) The order of $\gal(M/K)$ is prime to $n$. \\
(iv)  $gcd(n,N)=1$ where $N(L)=lcm(N_A(L),N_B(L))$.\\  
(v) The ramification index $e_p(L)$ of $L$ at primes $p|n$ satisfies the relation  $e_p(L)<p-1$.\\
Then there exists a natural map 
\[  B{^\chi} (K)/nB^{\chi}(K)  \stackrel{\tau^M_K \phi_K}{\longrightarrow}  \Vis_J({\cyr \Sha}(A/M))  \]
such that the  cardinality of  the kernel of $\tau^M_K\phi_K$ is bounded by $n^r$ where $r$ denotes  the rank of $A^\chi(K)$.  In particular, $\tau_K^M\circ \phi_K$ is injective if  the rank of $A^\chi(K)$ is zero. 
\end{theorem}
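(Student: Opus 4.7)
The plan is to derive this result by an argument parallel to Theorem \ref{AS} applied to the twisted configuration $(A^\chi, B^\chi, J^\chi)$ over $K$, then composed with the restriction $\tau_K^M$; the role of the hypothesis that $\chi$ trivializes over $M$ is precisely to convert visibility for the twisted objects over $K$ into visibility of $\Sha(A/M)$ for the original triple. Since $|\gal(M/K)|$ is coprime to $n$ by (iii), twisting by $\chi$ does not affect the $n$-torsion subgroup scheme, so $B^\chi[n]=B[n]\subset A^\chi\cap B^\chi$. The hypothesis $B(M)[n]=0$ forces $B^\chi(K)[n]=0$ (and similarly $(J^\chi/B^\chi)(K)[n]=0$), since $B^\chi(K)$ injects into $B(M)$ via the canonical isomorphism $B^\chi\cong B$ over $M$.

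Next I would construct $\phi_K$ directly from the Kummer sequence
\[ 0 \lrta B^\chi[n] \lrta B^\chi \longby{n} B^\chi \lrta 0, \]
which yields an injection $B^\chi(K)/nB^\chi(K) \hookrightarrow H^1(K,B^\chi[n])$. Composing with the map $H^1(K,B^\chi[n])\to H^1(K,A^\chi)$ induced by $B^\chi[n]\subset A^\chi$ defines $\phi_K$, and a standard diagram chase (using that the Kummer image in $H^1(K,J^\chi[n])$ dies in $H^1(K,J^\chi)$) shows the image lies in $\Vis_{J^\chi}(K,A^\chi)$. Composing with $\tau_K^M$ and using that $\chi$ is trivial on $\gal(\bar K/M)$, so that $A^\chi\cong A$ and $J^\chi\cong J$ canonically over $M$, identifies $\Vis_{J^\chi}(M,A^\chi)$ with $\Vis_J(M,A)$ and produces the map $\tau_K^M\circ\phi_K$ into this target.

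The heart of the proof, and where I expect the main obstacle, is showing that $\tau_K^M\circ\phi_K$ actually lands in $\Vis_J(\Sha(A/M))$, i.e.\ that each local component $\delta_w$ vanishes for places $w$ of $M$. The strategy is a place-by-place analysis lifted from the proof of Theorem \ref{AS}: for $w\mid v\mid\ell$ at primes of good reduction of $A$ and $B$ over $L$, the bound $e_p(L)<p-1$ from (v) combined with (iv) trivializes the relevant formal-group cohomology, so the local class sits in the unramified part and vanishes in $H^1(M_w,A)$; at primes of bad reduction, the coprimality of $n$ to $c_{v,A^\chi}(K)c_{v,B^\chi}(K)$ from (ii) forces the localization of $\phi_K(\bar P)$ into the image of the identity component of the N\'eron model, again vanishing in $H^1(M_w,A)$. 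The delicate point is that (iv) and (v) are formulated over $L$ while (ii) refers to the twisted varieties over $K$, and one must check that conductor and reduction data of $A^\chi/K$ are compatible with that of $A/L$; this uses that $|\chi|$ divides $[M:K]$ and hence is coprime to $n$ by (iii).

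Finally, for the kernel bound: the long exact sequence attached to $0\to B^\chi[n]\to A^\chi\to A^\chi/B^\chi[n]\to 0$ identifies $\ker\phi_K$ with a subquotient of $A^\chi(K)/nA^\chi(K)$, whose cardinality is at most $n^r$ with $r=\rk A^\chi(K)$. The further composition with $\tau_K^M$ introduces no additional kernel, because the inflation--restriction kernel $H^1(\gal(M/K),A^\chi(M)[n])$ is annihilated by both $n$ and $|\gal(M/K)|$, which are coprime by (iii), and therefore vanishes. In particular, if $r=0$ the composition is injective as asserted.
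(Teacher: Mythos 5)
Your outline follows the same architecture as the paper's proof (twist over $K$, construct $\phi_K$ via Lemma~\ref{vis}/Kummer theory, place-by-place local analysis, then bound the kernel via $H^1(\gal(M/K),\cdot)[n]=0$), and the kernel-bound argument at the end is correct. But there is a genuine gap in the most delicate step, the places $v \mid n$.

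You correctly flag the tension that hypotheses (iv), (v) are about $A,B$ over $L$, while (ii) and $\phi_K$ live on the twisted triple $(A^\chi,B^\chi,J^\chi)$ over $K$. However, your proposed resolution --- ``check that conductor and reduction data of $A^\chi/K$ are compatible with that of $A/L$; this uses that $|\chi|$ divides $[M:K]$ and hence is coprime to $n$'' --- does not work, and the coprimality in (iii) has nothing to do with this issue. If $\chi$ is ramified at a prime above $n$, then $A^\chi$ genuinely has additive reduction there, even though $A$ has good reduction, and the N\'eron-model smoothness / formal-group argument from \cite[Theorem 3.1]{AS} cannot be applied to $A^\chi$ over $K_v$ at all: the hypotheses of Lemma~\ref{smooth1} require the abelian varieties to be defined over $L$ with good reduction, which $A^\chi$ is not. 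The actual fix in the paper is structural, not a compatibility check: one observes that $\tau_K^M\circ\phi_K$ factors as
\[ B^\chi(K)/nB^\chi(K) \stackrel{\psi}{\lrta} B^\chi(M)/nB^\chi(M) \cong B(M)/nB(M) \stackrel{\phi_M}{\lrta} \Vis_J(M,A), \]
and then does the local computation at $w\mid n$ entirely over $M$ for the \emph{untwisted} triple $(A,B,J)$, which \emph{is} defined over $L$ and has good reduction at $n$ by (iv). Lemma~\ref{smooth1} then gives exactness of $0\to A(M_w^{ur})\to J(M_w^{ur})\to (J/A)(M_w^{ur})\to 0$, and the [AS] argument goes through. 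Your write-up hints at working in $H^1(M_w,A)$, so you are circling the right idea, but you never state this factorization and you ascribe the success to the wrong hypothesis; as written, the step at $v\mid n$ would not close. (Minor point: it is also not literally true that $B^\chi[n]=B[n]$ as a $\gal(\bar K/K)$-module --- one has $B^\chi[n]\cong B[n]\otimes\chi$; what the argument needs, and what does hold, is the inclusion $B^\chi[n]\subset A^\chi\cap B^\chi$ as subgroup schemes of $J^\chi$.)
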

Note that  in the above theorem, $A^\chi$ and $B^\chi$ may not have good reduction at primes dividing $n$. We also mention that to prove our result, we do not need to assume that the ramification index $e_v(K)$ of $K$ (resp. $e_p(M)$ of $M$)  at  primes dividing $n$ satisfy the assumption that $e_p(K)< p-1$ (resp.  $e_p(M)<p-1$).   Instead, we assume that $A$, $B$ and $J$ are defined over $L$ and the ramification index $e_p(L)$ of $L$ at primes dividing $n$ satisfy  that $e_p(L)< p-1$.  We also provide examples where $\tau_K^M\phi_K$ is non-trivial.

In the next  section \ref{tam}, we study certain conditions on the conductor of elliptic curves, under which the Tamagawa numbers of elliptic curves are $p$-adic unit for a fixed odd prime $p$ and  as a consequence  of this we prove the following

\begin{theorem}
Let $A$ and $B$ be elliptic curves defined over $L$ with good reduction at primes dividing $p$  of $K$ such that $A[p]\cong B[p]$ as  $\gal(\bar{L}/L)$-modules. Further suppose that \\
(a)  $e_p=e_p(L)<p-1$.\\
(b)  the prime to $p$-conductor of $A[p]$ is same as the conductor of $A$ and $B$ respectively over $K$.\\
(c) $A[p]$ is an irreducible $\gal(\bar{K}/K)$-module.\\
If the Mordell-Weil rank of $B$ is greater then the Mordell-Weil rank of $A$ over $K$ then ${\cyr \Sha}(A/K)$ contains an element of order $p$.
\end{theorem}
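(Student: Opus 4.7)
The plan is to deduce the statement by specializing Theorem 1.2 to $M=K$ (equivalently, taking the character $\chi$ to be trivial), applied to $A$ and $B$ inside a suitable ambient abelian variety $J$ defined over $L$. Fix a $\gal(\bar L/L)$-equivariant isomorphism $\phi\colon A[p]\xrightarrow{\sim}B[p]$, and let $\Delta\subset A\times B$ be the $\gal(\bar L/L)$-stable subgroup scheme $\{(a,-\phi(a)):a\in A[p]\}$. Put $J:=(A\times B)/\Delta$; the compositions $A\hookrightarrow A\times B\twoheadrightarrow J$ and $B\hookrightarrow A\times B\twoheadrightarrow J$ are closed immersions over $L$, and their intersection inside $J$ is the finite subgroup $A[p]$ (identified with $B[p]$ via $\phi$). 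In particular $B[p]\subset A$ as a subgroup scheme of $J$, and a direct computation gives $J/B\cong A/A[p]$, which is isomorphic to $A$ via multiplication by $p$.

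With $M=K$, I verify the hypotheses of Theorem 1.2. Assumption (c) (irreducibility of $A[p]$ as a $\gal(\bar K/K)$-module) forces $A(K)[p]=0$; via $A[p]\cong B[p]$ also $B(K)[p]=0$; combined with $J/B\cong A$ this yields $(J/B)(K)[p]=0$, giving (i). Assumption (a) is (v) verbatim. Good reduction of $A$ and $B$ at primes of $K$ above $p$ descends (by N\'eron--Ogg--Shafarevich) to good reduction over $L$ above $p$, so (iv) holds. Condition (iii) is vacuous since $\gal(M/K)=1$. The only substantive condition is (ii): for $v\mid p$ good reduction gives $c_v=1$, and for $v\nmid p$ assumption (b) says that the Galois representation $A[p]\cong B[p]$ is ramified at every bad prime of $A$ (resp.\ $B$) over $K$, which by the local Tamagawa analysis of Section~\ref{tam} forces $p\nmid c_{v,A}(K)\,c_{v,B}(K)$.

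Granting these, Theorem 1.2 produces a natural map
\[
  B(K)/p\,B(K)\longrightarrow \Vis_J(\Sha(A/K))
\]
whose kernel has cardinality at most $p^{r}$, where $r=\rk A(K)$. Since $B(K)[p]=0$, the left-hand side has size $p^{\rk B(K)}$, and the hypothesis $\rk B(K)>\rk A(K)$ forces the map to be nonzero. Any nonzero element in its image is an element of order $p$ in $\Vis_J(\Sha(A/K))\subset\Sha(A/K)$, proving the theorem.

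The main obstacle will be the local statement isolated in Section~\ref{tam}, namely that for an elliptic curve $A$ over a local field of residue characteristic different from $p$, ramification of $A[p]$ at the maximal ideal forces $p\nmid c_{v,A}$. In the split multiplicative case this should follow from the Tate-uniformization identity $c_v=-\mathrm{ord}_v(j_A)$ together with the classical criterion that $A[p]$ is unramified there iff $p\mid\mathrm{ord}_v(j_A)$; in the additive (and non-split multiplicative) case, the crude bound $c_v\le 4$ handles $p\ge 5$ for free, leaving only $p=3$ to be checked against the Kodaira classification. Packaging these local observations into the global coprimality statement needed in (ii) is what I expect Section~\ref{tam} to deliver.
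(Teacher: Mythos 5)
Your overall route is the paper's own: build $J$ by gluing $A$ and $B$ along the (anti-)diagonal copy of $A[p]$, reduce the Tamagawa condition to the local lemma on conductors (the paper's Lemma~\ref{compare}), and then feed everything into the AS-style visibility theorem to get a nonzero map $B(K)/pB(K)\to\Vis_J(\Sha(A/K))$. The only cosmetic difference is that you specialize Theorem~\ref{quadratic} at $M=K$, $\chi=\mathbf 1$, whereas the paper invokes Theorem~\ref{improv} directly; since Theorem~\ref{quadratic} at $M=K$ is proved by exactly the argument of Theorem~\ref{improv}, these are the same proof.

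One step as you have written it is wrong, and you should fix it before you rely on it: the claim that ``good reduction of $A$ and $B$ at primes of $K$ above $p$ descends (by N\'eron--Ogg--Shafarevich) to good reduction over $L$ above $p$.'' N\'eron--Ogg--Shafarevich does not give descent of good reduction along a possibly ramified extension $K_w/L_v$; the implication goes the other way (good reduction over $L_v$ implies good reduction over $K_w$), and there are elliptic curves with additive reduction over $L_v$ acquiring good reduction over a ramified $K_w$. What is actually needed for Theorem~\ref{improv} (via Lemma~\ref{smooth1}) is good reduction of $A$, $B$, $J$ over the completions $L_v$ for $v\mid p$, together with $e_p(L)<p-1$; the paper itself tacitly uses this slightly stronger hypothesis (it is part of Assumption~\ref{assumption}(c) and of the setup of Section~\ref{tam}), so you should simply assume it rather than try to derive it from good reduction over $K$. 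Finally, your closing paragraph sketching the local Tamagawa analysis by cases (Tate uniformization for split multiplicative, the bound $c_v\le 4$ plus Kodaira for the rest) is pointing at the right fact, but it is not what the paper does: Lemma~\ref{compare} proves $p\nmid c_v$ uniformly, via the identity $\dim_{\FF_p}E[p]^{I_v}=\dim_{\QQ_p}(V_pE)^{I_v}$ forced by the conductor equality, which gives $p$-divisibility of $E(p)^{I_v}$ and hence of $H^1(K_v^{ur}/K_v,E^{I_v})$. Your case analysis in particular does not yet settle the $p=3$ additive (type $IV$, $IV^*$) possibility, which the uniform argument dispatches without inspection.
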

For the definition of the conductor  of the $p$-adic Galois representation associated to an elliptic curve $A$ and the residual Galois representation $A[p]$ associated to $A$ at $p$, we refer the reader to \cite[Definition 3.2.27]{Wi}. 
The above theorem can be considered as a generalisation of  \cite[Theorem 1.3]{A} where author considers modular abelian varieties $A$ and $B$  of prime conductors defined over $K=\QQ$. We mention that our method to prove the above theorem can easily be extended to modular abelian varieties also.  For more details, see Theorem \ref{nontrivial} and other related results (for example Theorem \ref{nontrivial1}). Using these results and also results from Iwasawa Theory we study the ``algebraic part" of the leading term of the Hasse-Weil $L$-functions of elliptic curves defined over $\QQ$. We show under certain appropriate set of conditions on elliptic curves of rank $\leq 1$ that the algebraic part of  the leading terms of Hasse-Weil $L$-functions is divisible by $p$ (for more details see Corollary \ref{analy} and \ref{analy1}).  

In this section, we particularly consider the case  when $K=\QQ$ and $M/\QQ$ is a quadratic extension. Given an elliptic curve $E$  defined over $\QQ$ and a prime $p$, it is an important question to know the smallest degree extension over which the Shafarevich-Tate group of $E$ has an element of order $p$ (see for example \cite{C}). In fact, it is widely expected that in general  given an elliptic curve $E$ defined over $\QQ$ and a prime $p$, there exists a quadratic extension $M$ of $\QQ$ such that there is an element of order $p$ in ${\cyr \Sha}(E/M)$. We provide several examples to explain how results in this article may be useful   in finding such quadratic extensions. 

In  section \ref{prime1}, we consider the case when $n=p$ and $M/K$ is an extension of degree divisible by $p$.  In particular, we have the following theorem. 

\begin{theorem}
Let  $A$ and $B$ are abelian subvarieties of an abelian variety $J$ defined over $L$ such that $A\cap B$ is finite and $B[n] \subset A\cap B$. Further assume that \\
(i)  $B(K)[p]=0$ and $(J/B)(K)[p]=0$. \\
(ii) the ramification index  $e_v(M)$ of  $M$ at primes dividing $N(K)/N_A(K)$ is divisible by $p$,\\
(iii) $p\nmid \prod_{v|N_A(K)} c_{v,A}(K)c_{v,B}(K).$\\ 
(iv) $gcd(p,N(L))=1$ where $N(L)=lcm(N_A(L),N_B(L))$. \\
(v)  The ramification index $e_p(L)$ of $L$ at primes $p|n$ satisfies the relation  $e_p(L)<p-1$.\\
 Then there exists a natural map from 
 \[  B(K)/pB(K)  \stackrel{\tau^M_K \phi_K}{\longrightarrow} Vis_J({\cyr \Sha}(M/A))\] 
such that $\tau^M_K \phi_K$ is an injective homomorphism if the rank of $A(M)$ is zero. More generally, if $M$ is a cyclic extension of $K$ of degree $p$  then the $\ZZ/p\ZZ$-rank of the kernel of $\tau^M_K \phi_K$ is bounded by $(rank(A(K))+\frac{rank(A(M))-rank(A(K))}{p-1})$. \\
\end{theorem}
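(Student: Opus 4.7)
The plan is to apply Theorem \ref{AS} over the base field $K$, compose with the restriction $\tau_K^M$, show the image lies in $\Vis_J(\Sha(A/M))$ via a prime-by-prime local analysis, and bound the kernel via inflation-restriction. First I would apply Theorem \ref{AS} with $K$ in place of $L$: hypothesis (i) supplies $B(K)[p] = (J/B)(K)[p] = 0$; hypothesis (iv) ensures that the residue characteristics of the primes of $K$ where $A$ or $B$ has bad reduction are coprime to $p$, since such primes lie above primes of $L$ whose residue characteristics divide $N(L)$; and $B[p] \subset A \cap B$ descends from $L$. This produces a natural map
\[
\phi_K : B(K)/pB(K) \longrightarrow \Vis_J(K, A),
\]
with $|\ker \phi_K| \leq p^{\rk A(K)}$. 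Composing with $\tau_K^M$ gives a map $\tau_K^M \circ \phi_K : B(K)/pB(K) \longrightarrow \Vis_J(M, A)$ by functoriality.

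The next step is to verify that this composition lands inside $\Vis_J(\Sha(A/M))$ by showing that, for every prime $w$ of $M$, the localisation of the image vanishes in $H^1(M_w, A)$. Let $v$ be the prime of $K$ below $w$. If $v \nmid N_A(K) N_B(K)$, then both $A$ and $B$ have good reduction at $v$: for $v \nmid p$, unramified-cohomology vanishing (Lang's theorem for the N\'eron component) already gives triviality over $K_v$; for $v \mid p$, the crystalline-type argument from the proof of Theorem \ref{AS}, which requires only $e_p(L) < p-1$ (hypothesis (v)) together with good reduction of $A, B$ over $L$ (hypothesis (iv)), again gives triviality over $K_v$. If $v \mid N_A(K)$, hypothesis (iii) gives $p \nmid c_{v,A}(K) c_{v,B}(K)$, and the local class, which factors through the $p$-primary part of the component group of the N\'eron model of $A$ at $v$, vanishes over $K_v$. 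The critical case is $v \mid N(K)/N_A(K)$, i.e., a prime of bad reduction of $B$ but not of $A$, where $p$ may divide $c_{v,B}(K)$; here hypothesis (ii) supplies $p \mid e_w(M/K)$, and since $v \nmid p$ a tame Kummer computation shows that the local class, which is tame at $v$, is killed upon restriction to the sufficiently ramified completion $M_w$. The main technical obstacle is this last case: one must carry the Kummer class of $b \in B(K_v)$ through the component-group filtration of the N\'eron model of $B$ at $v$ and exploit the $p$-divisibility of $e_w(M/K)$ to annihilate the tame $p$-torsion contribution in $H^1(M_w, A)$.

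Finally, I would bound the $\FF_p$-rank of $\ker(\tau_K^M \circ \phi_K)$. Any element of this kernel fits into a short exact sequence with $\ker \phi_K$ (of $\FF_p$-dimension at most $\rk A(K)$ by Theorem \ref{AS}) on one side and a subgroup of $\ker \tau_K^M \cap \Img \phi_K$ on the other. Inflation-restriction identifies $\ker \tau_K^M$ with $H^1(\gal(M/K), A(M))$. When $\rk A(M) = 0$ one also has $\rk A(K) = 0$, so $\ker \phi_K = 0$; a diagram chase using the factorisation $\phi_K : B(K)/pB(K) \hookrightarrow H^1(K, B[p]) \to H^1(K, A)$ together with hypothesis (i) then shows that the remaining intersection is trivial, yielding injectivity. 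For cyclic $M/K$ of degree $p$, set $G = \gal(M/K)$ and decompose $A(M) \otimes \QQ_p$ as a $\QQ_p[G]$-module. Since the irreducible $\QQ_p$-representations of $G$ are the trivial one (dimension $1$) and $\QQ_p(\zeta_p)$ (dimension $p - 1$),
\[
A(M) \otimes \QQ_p \;\cong\; \QQ_p^{\rk A(K)} \oplus \QQ_p(\zeta_p)^{b}, \qquad b = \frac{\rk A(M) - \rk A(K)}{p - 1}.
\]
A Tate-cohomology computation using $H^1(G, \ZZ) = 0$ and $H^1(G, \ZZ[\zeta_p]) \cong \FF_p$ then gives $\dim_{\FF_p} H^1(G, A(M)) \leq b$, which, combined with the bound on $\ker \phi_K$, produces the stated bound $\rk A(K) + (\rk A(M) - \rk A(K))/(p-1)$.
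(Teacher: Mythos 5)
Your proposal follows essentially the same route as the paper: construct $\phi_K$ from Lemma \ref{vis}, compose with $\tau_K^M$, verify local triviality over every completion of $M$ via the same case split (Archimedean primes, $v\mid p$, $v\mid N_A(K)$, $v\mid N(K)/N_A(K)$, and the remaining good-reduction primes), and bound $\ker(\tau_K^M\circ\phi_K)$ by combining the snake lemma with the $\gal(M/K)$-module structure of $A(M)$. Two points of comparison. First, the case $v\mid N(K)/N_A(K)$, which you rightly flag as the crux, is dispatched in the paper by a citation to Lang--Tate \cite[\S4, Corollary 1]{LT}: since $A$ has good reduction at such $v$ and $v\nmid p$, the visibility class $\phi_K(x)|_{K_v}$ lies in $H^1(K_v,A)[p]$, and the Lang--Tate description of $H^1(K_v,A)$ for $A$ of good reduction shows that restriction to an extension with $p\mid e_w$ annihilates $p$-torsion. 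Your instinct (tame Kummer argument exploiting $p\mid e_w(M/K)$) is precisely this, but your framing via ``the component-group filtration of the N\'eron model of $B$'' is misplaced --- the class sits in the cohomology of $A$, which has good reduction at these primes, and the component group of $B$ plays no role. Second, your kernel bound decomposes $A(M)\otimes\QQ_p$ as a $\QQ_p[G]$-module, whereas the paper decomposes $A(M)\otimes\ZZ_p$ into the three indecomposable $\ZZ_p[G]$-lattices $\ZZ_p$, $\ZZ_p[G]$, $I(G)$; the rational decomposition conflates $\ZZ_p\oplus I(G)$ with $\ZZ_p[G]$ (which have different $H^1(G,-)$), but both paths land on the identical upper bound $\rk A(K)+\frac{\rk A(M)-\rk A(K)}{p-1}$, so your coarser decomposition suffices.
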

Note that in the above theorem, we do not put any condition on the Tamagawa numbers of $B$ at the primes of good reduction of $A$. 
Finally, we also  prove a generalisation  of the above theorem  when $K$ is replaced by a certain  $p$-adic Lie extension of a number field containing $L$ and discuss an example to illustrate our result. 

\section{visibility over number fields}\label{lemma}
We begin by stating the following two crucial lemmas. We shall use  these lemmas to prove our results without using the conditions in Theorem \ref{AS}  on the ramification index of $K$ at primes dividing $n$.
\begin{lemma}\label{smooth}(\cite[Lemma 3.6]{AS})
Let $\mathcal{I}\stackrel{\phi}{\lrta} \mathcal{C}$ be a smooth surjective morphism of schemes over a strictly Henselian local ring $R$. Then the induced map $\mathcal{I}(R) \lrta \mathcal{C}(R)$ is surjective.
\end{lemma}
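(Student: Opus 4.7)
The plan is to reduce the question to showing that every smooth surjective $R$-scheme has an $R$-valued point, and then to invoke a standard Henselian lifting property to produce one. Given any $c\in\mathcal{C}(R)$, I would form the pullback $\mathcal{I}':=\mathcal{I}\times_{\mathcal{C},c}\mathrm{Spec}(R)$ along the morphism $c:\mathrm{Spec}(R)\lrta\mathcal{C}$. Because smoothness and surjectivity are both stable under base change, the projection $\pi:\mathcal{I}'\lrta \mathrm{Spec}(R)$ is again smooth and surjective, and producing a lift $i\in\mathcal{I}(R)$ with $\phi(i)=c$ is equivalent to producing a section of $\pi$. It therefore suffices to establish that $\mathcal{I}'(R)\neq\emptyset$ for every smooth surjective $R$-scheme $\mathcal{I}'$.

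Let $\mathfrak{m}$ denote the maximal ideal of $R$ and $k=R/\mathfrak{m}$ its residue field, which is separably closed since $R$ is strictly Henselian. The closed fiber $\mathcal{I}'_k$ is smooth and nonempty over $k$. Any closed point $x_0\in\mathcal{I}'_k$ has residue field finite separable over $k$ (smooth $k$-schemes are geometrically reduced, with separable residue fields at their closed points), and the assumption that $k$ is separably closed forces this residue field to coincide with $k$, yielding a $k$-rational point of $\mathcal{I}'_k$ that one can try to lift.

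The remaining step is to lift $x_0$ to an $R$-valued point of $\mathcal{I}'$, and this is the crux of the argument. I would dispatch it by invoking the standard equivalent formulation of the Henselian property: for a smooth morphism $X\lrta\mathrm{Spec}(R)$ with $R$ Henselian, the specialization map $X(R)\lrta X(k)$ is surjective (cf.\ EGA IV.18.5.17). Concretely, the smoothness of $\mathcal{I}'$ near $x_0$ lets one choose a local factorization of $\pi$ as an \'etale map to affine space $\mathbb{A}^n_R$ followed by the structural projection; one then lifts the image of $x_0$ trivially to $\mathbb{A}^n_R$, and uses Hensel's lemma along the \'etale slice to lift through the \'etale factor. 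Formal smoothness provides compatible truncations modulo each power of $\mathfrak{m}$, while the Henselian hypothesis is precisely what guarantees that these truncations assemble into an honest $R$-point. I expect this Henselian lifting step to be the main obstacle, since it carries all the genuine analytic content; the base-change reduction and the search for a $k$-rational point on the special fiber are otherwise formal.
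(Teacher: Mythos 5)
The paper itself supplies no proof of this lemma: it is stated verbatim with the citation \cite[Lemma~3.6]{AS} standing in for an argument. Your proof is the standard one and is correct. The base-change reduction (pull back $\mathcal{I}\to\mathcal{C}$ along a given $c\in\mathcal{C}(R)$, noting smoothness and surjectivity are stable under base change, so that it suffices to produce a section of a smooth surjective $R$-scheme), the production of a $k$-rational point on the nonempty smooth special fiber using that the residue field of a strictly Henselian local ring is separably closed, and the lift via the Henselian smooth-lifting property (EGA IV, 18.5.17, equivalently the local \'etale factorization of a smooth morphism) together give a complete proof.

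One small caution about your closing sentence. The picture of ``compatible truncations modulo powers of $\mathfrak{m}$ assembling into an honest $R$-point'' is really the argument for a \emph{complete} local ring; for a Henselian but non-complete $R$, a compatible tower of infinitesimal lifts only produces a point of the completion $\widehat{R}$, not of $R$ itself. What actually carries the Henselian case is exactly the step you describe just before that: factor $\pi$ locally near $x_0$ as an \'etale map to $\mathbb{A}^n_R$, lift the image of $x_0$ trivially to an $R$-point of $\mathbb{A}^n_R$, and then lift through the \'etale factor using the Henselian property, which by definition lets one extend a section of an \'etale $R$-scheme over the closed point to a section over all of $\mathrm{Spec}(R)$. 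So the argument is right, but I would drop or rephrase the final sentence so as not to suggest that the Henselian lifting is obtained by passing to an inverse limit.
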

\begin{lemma}\label{smooth1}
Suppose that 
\begin{equation}\label{exact1}
0 \lrta A \lrta  J \stackrel{\psi}{\lrta} C \lrta 0 
\end{equation}
is an exact sequence of abelian varieties with good reduction over a finite Galois extension $F$ of $\QQ_p$. Further assume that the ramification index $e_p$ of $F$ at  $p$ satisfies $e_p < p-1$. Then for every finite Galois extension $K$ of $F$, we have the following natural exact sequence 
 \begin{equation}\label{exact2}
 0 \lrta A(K^{ur}) \lrta  J(K^{ur})  \lrta C(K^{ur}) \lrta 0 
 \end{equation}
induced by the exact sequence \eqref{exact1}. Here $K^{ur}$ denotes the maximal unramified extension of $K$. 
\end{lemma}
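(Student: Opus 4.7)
The plan is to reduce the statement to Lemma \ref{smooth} via the theory of N\'eron models. Left-exactness of the sequence of $K^{ur}$-points is automatic from the inclusion $A \hookrightarrow J$ and the identification $A = \ker \psi$, so the real task is to establish the surjectivity of $J(K^{ur}) \lrta C(K^{ur})$.

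First I would spread the short exact sequence \eqref{exact1} out to a sequence of N\'eron models over $\mathcal{O}_K$. Since $A$, $J$, and $C$ have good reduction over $F$ and $K \supseteq F$, they retain good reduction over $K$, so their N\'eron models $\mathcal{A}$, $\mathcal{J}$, $\mathcal{C}$ over $\mathcal{O}_K$ are abelian schemes. By the N\'eron mapping property the morphism $\psi$ extends uniquely to a homomorphism $\mathcal{J} \lrta \mathcal{C}$ of abelian schemes, and this extension is smooth and surjective because a surjective homomorphism of abelian schemes over a Dedekind base is faithfully flat with smooth abelian-variety fibres of constant relative dimension. The hypothesis $e_p < p-1$ enters here, via the tameness criterion, to guarantee that the scheme-theoretic kernel of $\mathcal{J} \lrta \mathcal{C}$ over $\mathcal{O}_K$ really is $\mathcal{A}$, so that the resulting sequence $0 \lrta \mathcal{A} \lrta \mathcal{J} \lrta \mathcal{C} \lrta 0$ is short exact in the fppf topology and not merely on the generic fibre.

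Next I would apply Lemma \ref{smooth} to this sequence. The ring $\mathcal{O}_{K^{ur}}$ of integers of $K^{ur}$ is the strict Henselisation of $\mathcal{O}_K$, since the residue field of $K^{ur}$ is the separable closure of that of $K$; in particular it is a strictly Henselian local ring. Lemma \ref{smooth} applied to the smooth surjective morphism $\mathcal{J} \lrta \mathcal{C}$ then produces the surjection $\mathcal{J}(\mathcal{O}_{K^{ur}}) \lrta \mathcal{C}(\mathcal{O}_{K^{ur}})$. The N\'eron mapping property finally identifies $\mathcal{J}(\mathcal{O}_{K^{ur}})=J(K^{ur})$, $\mathcal{C}(\mathcal{O}_{K^{ur}})=C(K^{ur})$, and $\mathcal{A}(\mathcal{O}_{K^{ur}})=A(K^{ur})$, turning the above into the desired surjectivity on $K^{ur}$-points and hence yielding the exact sequence \eqref{exact2}.

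The main obstacle, as I see it, is the passage from the generically exact sequence of abelian varieties to genuine short-exactness of the N\'eron model sequence as fppf sheaves over the possibly ramified base $\mathcal{O}_K$: one must rule out any non-reduced thickening or spurious component in $\ker(\mathcal{J} \lrta \mathcal{C})$ that could obstruct the identification of this kernel with $\mathcal{A}$. This is precisely where the tameness inequality $e_p < p-1$ is used, and treating the three N\'eron models together, rather than in isolation, is the delicate part of the argument.
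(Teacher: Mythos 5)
Your outline correctly identifies the main ingredients — N\'eron models, Lemma~\ref{smooth}, and the fact that $\mathcal{O}_{K^{ur}}$ is strictly Henselian — but it applies the ramification hypothesis over the wrong base, and this is precisely where the lemma has its content. You invoke $e_p < p-1$ ``via the tameness criterion'' to control the kernel of $\mathcal{J}\to\mathcal{C}$ over $\mathcal{O}_K$. But the hypothesis of the lemma only says that the ramification index of $F$ over $\QQ_p$ is $< p-1$; the extension $K/F$ is an arbitrary finite Galois extension and may well be wildly ramified, so that $e_p(K)\geq p-1$. Over $\mathcal{O}_K$ (and hence over $\mathcal{O}_{K^{ur}}$) the Raynaud/Fontaine tameness bound you are leaning on simply need not hold. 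In the same breath, the assertion that ``a surjective homomorphism of abelian schemes over a Dedekind base is faithfully flat with smooth abelian-variety fibres of constant relative dimension'' is not something one can just write down: its fibres are torsors under the kernel and may be non-reduced (think of $[p]$ on an ordinary abelian scheme in residue characteristic $p$), and whether the special-fibre map is even surjective is exactly the delicate point.

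The paper's proof avoids this by never arguing over $\mathcal{O}_K$. It first forms N\'eron models over $X_{F^{ur}}=\operatorname{Spec}\mathcal{O}_{F^{ur}}$, where the ramification index is still $e_p(F)<p-1$ because $F^{ur}/F$ is unramified, and quotes the proof of [AS, Theorem 3.1, case 3] to get that $\mathcal{J}\to\mathcal{C}$ is smooth and surjective there. It then base changes along $X_{K^{ur}}\to X_{F^{ur}}$ — smoothness and surjectivity are stable under base change — and uses [BLR, Ch.~7.4, Cor.~4]: because $J$ and $C$ have good reduction over $F$, the N\'eron model over $K^{ur}$ is the base change of the N\'eron model over $F^{ur}$. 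Only then does it apply Lemma~\ref{smooth}. This ``work over $F^{ur}$, base change afterwards'' maneuver is the whole point of the lemma (the paper explicitly highlights in the introduction that no ramification bound is imposed on $K$), and it is the step your proposal is missing.
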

\begin{proof}
For an extension $K$ of $F$, let $\mathcal{O}_{K^{ur}}$ denote the ring of integers of $K^{ur}$ and put $X_{K^{ur}}:=spec(\mathcal{O}_{K^{ur}})$.  Let $\J$ (resp. $\C$) denote the N\'eron model of  $J$ (resp $C$) over $X_{F^{ur}}$. Since $e_p<p-1$, the natural morphism of schemes  $\J \stackrel{\psi}{\lrta} \C$ induced by $\psi$ is smooth and surjective (see the proof of \cite[Theorem 3.1, case 3]{AS}). The property of smoothness and surjectivity are stable under base change and therefore  the induced map
\[  \J\times_{X_{F^{ur}}} X_{K^{ur}} \lrta  \C\times_{X_{F^{ur}}} X_{K^{ur}} \]
is smooth and surjective.  
Since $J$ and $C$ has good reduction over $F$, we get that the N\'eron model of $J$ (resp $C$ ) over $K^{ur}$ is naturally isomorphic to  $\J\times_{X_{F^{ur}}} X_{K^{ur}}$ (resp. $ \C\times_{X_{F^{ur}}} X_{K^{ur}} $) (see \cite[Chapter 7.4, Corollary 4]{BLR}). Now from the universal property of N\'eron model of $J$ (resp. $C$) over $K^{ur}$  and Lemma \ref{smooth}, we get that the induced map
 $J(K^{ur})  \lrta C(K^{ur})$ is surjective. This implies that the sequence
 \[  0 \lrta A(K^{ur}) \lrta  J(K^{ur})  \lrta C(K^{ur}) \lrta 0 \]
 is exact.
\end{proof}

To prove our results we need the following key lemma. 
\begin{lemma}\label{vis}(\cite[Lemma 3.7]{AS})
Let $A$ and $B$ be abelian subvarieties of an abelian variety $J$ defined over $L$ such that $A\cap B$ is finite and $B[n] \subset A\cap B$. 
Let $K$ be a finite extension of $L$ such that 
$(J/B)(K)[n]=0$ and $B(K)[n]=0$. Then 
there is a natural map
\[   \phi_K  :  B(K) / nB(K)  \lrta \Vis_J(K,A)  \]
such that $ker(\phi) \subset J(K)/(B(K) + A(K) ) $. If the rank of $A(K)$ is zero then $\phi$ is an injective map. More generally,  the cardinality of the kernel of $\phi_K$ is bounded by $n^r$ where $r$ is the Mordell-Weil rank of $A$. 
\end{lemma}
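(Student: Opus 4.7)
The plan is to build $\phi_K$ via Kummer theory using the hypothesis $B[n]\subset A\cap B$, verify that the image is $J$-visible, and deduce the kernel bound from the two $n$-torsion hypotheses on $K$-rational points.

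I would first define $\phi_K$ as the composition
\[
B(K)/nB(K)\stackrel{\delta}{\hookrightarrow} H^1(K,B[n])\lrta H^1(K,A),
\]
where $\delta$ is the connecting map of the Kummer sequence $0\to B[n]\to B\stackrel{n}{\to}B\to 0$ and the second arrow is induced by the inclusion of $G_K$-modules $B[n]\subset A$. Concretely, for $b\in B(K)$ one picks $\tilde b\in B(\bar K)$ with $n\tilde b=b$ and represents $\phi_K(b)$ by the cocycle $c_b(\sigma)=\sigma(\tilde b)-\tilde b$, which has values in $B[n]\subset A$; the class in $H^1(K,A)$ is independent of $\tilde b$ since any two lifts differ by an element of $B[n]\subset A$, contributing a coboundary.

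Next, I would verify $\Img(\phi_K)\subset\Vis_J(K,A)$. By functoriality, the inclusion $B[n]\subset A\subset J$ factors also as $B[n]\subset B\subset J$, so the composition $B(K)/nB(K)\stackrel{\phi_K}{\to} H^1(K,A)\to H^1(K,J)$ equals $B(K)/nB(K)\to J(K)/nJ(K)\to H^1(K,J[n])\to H^1(K,J)$; the last two arrows compose to zero by the Kummer exact sequence for $J$. Hence $\phi_K(b)\in \ker\bigl(H^1(K,A)\to H^1(K,J)\bigr)=\Vis_J(K,A)$.

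For the kernel, suppose $\phi_K(b)=0$. Then $c_b$ is an $A$-coboundary, so there is $a\in A(\bar K)$ with $\sigma(a)-a=\sigma(\tilde b)-\tilde b$ for every $\sigma\in G_K$. Set $j:=\tilde b-a\in J(\bar K)$; since $\sigma(j)=j$, one has $j\in J(K)$. As $c_b(\sigma)\in B[n]$, $\sigma(na)=na+nc_b(\sigma)=na$, so $na\in A(K)$ and $b=n\tilde b=nj+na$. The assignment $b\mapsto j\bmod(A(K)+B(K))$ is well defined on $\ker\phi_K$ (obtained by tracking how $\tilde b$ and $a$ may be jointly adjusted) and delivers the inclusion $\ker\phi_K\subset J(K)/(A(K)+B(K))$ stated in the lemma. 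Separately, the assignment $b\mapsto na\bmod nA(K)$ gives a map $\rho:\ker\phi_K\to A(K)/nA(K)$, which is injective: if $na=n\alpha$ for some $\alpha\in A(K)$, then $b=n(j+\alpha)\in nJ(K)\cap B(K)$; the hypothesis $(J/B)(K)[n]=0$, applied to the snake lemma on $0\to B\to J\to J/B\to 0$ with multiplication by $n$, forces $B(K)/nB(K)\hookrightarrow J(K)/nJ(K)$, so $b\in nB(K)$. The bound on $|\ker\phi_K|$ then follows from the structure of $A(K)/nA(K)$ as a finite group attached to a finitely generated abelian group of rank $r$; in particular $\phi_K$ is injective when $r=0$.

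The main obstacle will be the careful bookkeeping that goes into the well-definedness of both maps $b\mapsto j$ and $b\mapsto na$: the lift $\tilde b$ is only determined modulo $B[n]\subset A$, the cobounding element $a$ is only determined modulo $A(K)$, and these ambiguities must be shown to propagate to nothing modulo the claimed target subgroup. The hypothesis $B(K)[n]=0$ is what normalises the choice of $n$-th root on $K$-rational points coherently, while $(J/B)(K)[n]=0$ is the ingredient that powers the final injectivity step for $\rho$.
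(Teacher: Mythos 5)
The paper offers no proof of its own here; it simply cites \cite[Lemma 3.7]{AS}, and the argument you reconstruct --- $\phi_K$ as the Kummer connecting map composed with $H^1(K,B[n])\to H^1(K,A)$, visibility by functoriality of the Kummer sequences for $B$ and $J$, and the kernel analysis by producing $j=\tilde b-a\in J(K)$ and $na\in A(K)$ from a cobounding element $a$ --- is the standard one and matches the Agashe--Stein approach in outline. However, two steps are asserted where they actually need a short argument, and in both cases the argument is exactly where the remaining hypotheses enter.

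First, the stated inclusion $\ker\phi_K\subset J(K)/(A(K)+B(K))$ requires the assignment $b\mapsto j\bmod(A(K)+B(K))$ to be injective, not merely well defined; you only check well-definedness. If $j=\alpha+\beta$ with $\alpha\in A(K)$, $\beta\in B(K)$, then $b-n\beta=n\alpha+na\in A(K)\cap B(K)$. That intersection is a finite group of order prime to $n$, because it sits inside $(A\cap B)(K)$ and any element of order divisible by a prime $p\mid n$ would give a nonzero point of $B(K)[p]\subset B(K)[n]=0$; hence $b-n\beta\in nB(K)$ and $b$ is trivial in $B(K)/nB(K)$. Second, your injection $\rho\colon\ker\phi_K\hookrightarrow A(K)/nA(K)$ only bounds $\#\ker\phi_K$ by $n^{r}\cdot\#A(K)[n]$, and only gives injectivity for $r=0$ if $A(K)[n]=0$. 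That vanishing is true under the hypotheses, but it must be extracted: given $\alpha\in A(K)[n]$, its image in $(J/B)(K)$ lies in $(J/B)(K)[n]=0$, so $\alpha\in B(\bar K)\cap J(K)=B(K)$, and then $\alpha\in B(K)[n]=0$. Without recording $A(K)[n]=0$, the bound $n^{r}$ and the $r=0$ injectivity claim do not follow from what you wrote.
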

\qed


We shall use results mentioned in this section to produce visible elements over certain extension of number fields. 
We end this section with the following theorem which is an extension of \cite[Theorem 3.1]{AS} over general number fields. The proof of the theorem is similar to \cite{AS}, except that in this case we use Lemma \ref{smooth1} to  state it over number fields $K/L$ which do not satisfy the ramification condition $e_p(K)<p-1$ for primes $p|n$. 

\begin{theorem}\label{improv}
Let  $A$, $B$ be abelian  sub varieties of an abelian variety $J$  defined over a number field $L$ such that $A\cap B$ is finite and $B[n]\subset A$ as a subgroup scheme of $J$.  Let $N(L)$ be an integer divisible by the primes of bad reduction of $A$ or  $B$ over $L$. Let $K$ be a finite Galois extension of $L$. Suppose that  $n$ is coprime to $N(L)$, $B(K)[n]=0$ and $(J/B)(K)[n]=0$. Then we have a natural map  
\[  B(K)/nB(K)  \stackrel{\phi_K}{\longrightarrow}  \Vis_J(A/K)  \]
such that the cardinality of the kernel of $\phi_K$ is bounded by $n^r$ where $r$ is the Mordell-Weil rank of $A$. 
Further, if $e_p(L)< p-1$ for all primes $p|n$ and that  $n$ is coprime to $c_{v,A}(K)$ and $c_{v,B}(K)$ for every prime $v$ of $K$ then $\phi_K$ factors through
\[  B(K)/nB(K)  \stackrel{\phi_K}{\longrightarrow}  \Vis_J({\cyr \Sha}(A/K))  \]
\end{theorem}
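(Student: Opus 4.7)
My plan is to derive the first assertion directly from Lemma \ref{vis} applied to the field $K$, and to obtain the Shafarevich--Tate conclusion by adapting the argument of Theorem \ref{AS}, replacing its appeal to Lemma \ref{smooth} (which would force the ramification hypothesis $e_p(K)<p-1$) by Lemma \ref{smooth1} (which only requires $e_p(L)<p-1$). The map $\phi_K$ and the bound $n^r$ on its kernel come immediately from Lemma \ref{vis}, since its hypotheses $B(K)[n]=0$ and $(J/B)(K)[n]=0$ are among the assumptions. What remains is to show that, under the extra conditions $e_p(L)<p-1$ for $p\mid n$ and the Tamagawa coprimality, the image of $\phi_K$ actually lies inside ${\cyr \Sha}(A/K)$.

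For the Shafarevich--Tate statement I would fix $b\in B(K)$ and exploit that, by construction, $\phi_K(b)$ is the image under the connecting homomorphism $(J/A)(K)\lrta H^1(K,A)$ attached to $0\lrta A\lrta J\lrta J/A\lrta 0$ of the class of $b$ modulo $A$. Thus for a place $v$ of $K$, local triviality of $\phi_K(b)|_v$ is equivalent to lifting the image of $b$ through $J(K_v)\lrta (J/A)(K_v)$. Archimedean places are disposed of at once: the hypothesis $e_p(L)<p-1$ for all $p\mid n$ forces $n$ to be odd, so the restriction of the $n$-torsion class $\phi_K(b)$ to the $2$-torsion group $H^1(K_v,A)$ vanishes.

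At a non-archimedean place $v$ of good reduction for $A$, $B$ and $J$ (hence also for $J/A$), I would combine two facts: the local class lies in the unramified subgroup $H^1(K_v^{ur}/K_v,A(K_v^{ur}))$, which vanishes for good reduction (Lang's theorem plus the formal-group contribution); and unramifiedness itself follows from surjectivity of $J(K_v^{ur})\twoheadrightarrow (J/A)(K_v^{ur})$. For $v\nmid n$ this surjectivity is classical, but for $v\mid n$ (automatically of good reduction since $\gcd(n,N(L))=1$) it is exactly where Lemma \ref{smooth1} enters: $v$ lies above a prime $\lambda$ of $L$ with $e_p(L_\lambda)\le e_p(L)<p-1$, so Lemma \ref{smooth1} with $F=L_\lambda$ produces a smooth surjection of N\'eron models over $X_{L_\lambda^{ur}}$, and base-change to $X_{K_v^{ur}}$ then yields the required surjection on points. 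At a non-archimedean $v$ of bad reduction (necessarily coprime to $n$), the N\'eron-model sequence on connected components reduces the obstruction to lifting the $n$-torsion image through $J(K_v)\lrta (J/A)(K_v)$ to the component groups of the N\'eron models of $A$ and $B$ at $v$, whose orders are $c_{v,A}(K)$ and $c_{v,B}(K)$; the coprimality of $n$ to these Tamagawa numbers kills the obstruction on $n$-torsion. The main obstacle, and the only substantive departure from the proof of Theorem \ref{AS}, is the case $v\mid n$ of good reduction when $e_p(K_v)\ge p-1$: Lemma \ref{smooth1} resolves it cleanly by performing the smoothness argument over $L^{ur}$ and then base-changing, exploiting that smoothness and surjectivity persist under base change.
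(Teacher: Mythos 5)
Your proposal follows essentially the same route as the paper: Lemma~\ref{vis} gives $\phi_K$ and the bound $n^r$ on its kernel, and the Shafarevich--Tate refinement is obtained by running the local analysis of \cite[Theorem~3.1]{AS} place by place, with the only real modification being that at $v\mid n$ one invokes Lemma~\ref{smooth1} (applied over $L$ and then base-changed) instead of the original Lemma~\ref{smooth}, thereby avoiding the ramification hypothesis $e_p(K)<p-1$. One small imprecision worth flagging: $\phi_K(b)$ is not the connecting image of the class of $b$ itself in $(J/A)(K)$ (which is visibly trivial, as $b$ lifts to $J(K)$), but rather the connecting image of $\pi(b')$ for a chosen $n$-division point $b'\in B(\bar K)$ of $b$, whose image in $(J/A)$ is $K$-rational because $B[n]\subset A$; your subsequent unramifiedness and component-group arguments do not actually depend on this slip, so the proof plan is sound.
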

\begin{proof}
 Using Lemma \ref{vis} we get a natural map 
\[   \phi  :  B(K) / nB(K)  \stackrel{\phi_K}{\lrta} \Vis_J(K,A)  \]
such that the cardinality of the kernel of $\phi_K$ is bounded by   $n^r$. 
For a prime $v$ of $K$, let $ \kappa_v$ be the natural restriction map 
\[  H^1(K,A) \lrta H^1(K_{w},A)  .\]
Let $x\in B(K)$. 
Using the assumption that $c_{v,A}(K)$ and $c_{v,B}(K)$ are coprime to $n$ for every finite prime $v$ of $K$, we get by an argument similar to  \cite[Theorem 3.1]{AS} that $\phi_K(x)$ belongs to  the kernel of  $\kappa_v$ for ever  prime $v\nmid n$. If $v|n$ then Lemma \ref{smooth1} implies that the sequence
 \[  0 \lrta A(K^{ur}) \lrta  J(K^{ur})  \lrta C(K^{ur}) \lrta 0 \]
 is exact where  $C=J/A$. Now again it follows by an argument similar to \cite[Theorem 3.1]{AS} that $\phi_K(x)$ belong to kernel of $\kappa_v$. This show that  $\phi_K$ factors through $B(K)/n(B(K) \lrta  \Vis_J({\cyr \Sha}(A/K)$. 
\end{proof}

\section{twist of abelian varieties and visible elements}\label{prime}
As before we fix a finite Galois extension $L/\QQ$ and an odd integer $n$. Let $e_p=e_p(L)$ be the ramification index of $L$  at prime number $p$.  Let $K$ be a finite Galois extension of $\QQ$ such that $L\subset K$. We state the following  assumptions which we shall refer as and when needed. 

\begin{assumption}\label{assumption}
(a) Let  $A$ and $B$ are abelian subvarieties of an abelian variety $J$ defined over $L$ such that $A\cap B$ is finite and $B[n] \subset A\cap B$.\\
(b) The ramification index $e_p$ of $L$ at primes $p|n$ satisfies the relation  $e_p<p-1$.\\
(c) $gcd(n,N(L))=1$ where $N(L)=lcm(N_A(L),N_B(L))$. \\
(d) $B(K)[n]=0$ and $(J/B)(K)[n]=0$. 
\end{assumption}



Let $M$ be a finite Galois extension of $K$ and let $\chi$ be a character of $\gal(M/K)$. For an abelian variety $A$, we denote by $A^\chi$ the twist of $A$ by $\chi$. For the definition and a detailed discussion on the twist of an abelian variety by a finite order character of Galois group, we refer the reader to \cite{Ki}.  Now, let $A$ and $B$ be abelian subvarieties of an abelian variety $J$ defined over $L$ as in Assumption \ref{assumption}. Then $A^\chi$ and $B^\chi$ are abelian sub varieties of $J^\chi$ defined over $K$ such that $A^\chi\cap B^\chi$ is finite and $B^\chi[p]\subset A^\chi\cap B^\chi$. Since $A^\chi \cong A$ (resp.  $J^\chi \cong J$) over $M$, we have a natural isomorphism from $Vis_{J^\chi}(M,A^\chi) \lrta Vis_{J}(M,A)$.  Note that $A^\chi$ and $B^\chi$ may not satisfy  Assumption \ref{assumption}(c).  Nevertheless, we have the following theorem. 

\begin{theorem}\label{quadratic}
Suppose that $A$ and $B$ are abelian subvarieties of an abelian variety $J$ defined over $L$ satisfying Assumption \ref{assumption}. Further assume that \\
(i) $B(M)[n]=0$ and $(J/B)(M)[n]=0$. \\
(ii) $n$ is coprime  to $\prod_v c_{v,A^\chi}(K)c_{v,B^\chi}(K)$ where $v$ varies over the primes of bad reduction of $A^\chi$ or $B^\chi$.\\
(iii) The order of $\gal(M/K)$ is prime to $n$. \\
Then the natural map
\[  B^{\chi}(K)/nB^{\chi}(K) \stackrel{\phi_K}{\lrta} \Vis_{J^\chi}(K,A^\chi) \stackrel{\tau_K^M}{\lrta}  \Vis_{J^\chi}(M,A^\chi)  \cong \Vis_J(M,A) \]
factors through a map 
\[  B{^\chi} (K)/nB^{\chi}(K)  \stackrel{\tau^M_K \phi_K}{\longrightarrow}  \Vis_J({\cyr \Sha}(A/M))  .\]
The cardinality of  the kernel of $\tau^M_K\phi_K$ is bounded by $n^r$ where $r$ denotes  the rank of $A^\chi(K)$.  In particular, $\tau_K^M\circ \phi_K$ is injective if the rank of $A^\chi(K)$ is zero. 
\end{theorem}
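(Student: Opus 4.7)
The plan is to apply Lemma~\ref{vis} to the twisted pair $A^\chi, B^\chi \subset J^\chi$ over $K$, then post-compose with the restriction map to $M$, where the twist trivialises and the good reduction of $A$ and $J$ over $L$ can be exploited at primes dividing $n$.

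First I would verify the hypotheses of Lemma~\ref{vis} for $A^\chi, B^\chi$ inside $J^\chi$ over $K$. Since $\chi$ is trivialised on $\gal(\bar K/M)$, base change to $M$ identifies $B^\chi_M \cong B_M$ and $(J^\chi/B^\chi)_M \cong (J/B)_M$, giving natural inclusions $B^\chi(K) \hookrightarrow B(M)$ and $(J^\chi/B^\chi)(K) \hookrightarrow (J/B)(M)$. Hypothesis (i) therefore forces $B^\chi(K)[n]=(J^\chi/B^\chi)(K)[n]=0$, so Lemma~\ref{vis} yields
\[
\phi_K : B^\chi(K)/nB^\chi(K) \longrightarrow \Vis_{J^\chi}(K, A^\chi),
\]
with $|\ker \phi_K| \le n^r$, $r = \rk A^\chi(K)$. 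Hypothesis (iii) then guarantees that restriction to $M$ is injective on $n$-torsion classes, since $\ker\bigl(\tau_K^M : H^1(K, A^\chi) \to H^1(M, A^\chi_M)\bigr) = H^1(\gal(M/K), A^\chi(M))$ is annihilated by $[M:K]$ and thus has trivial $n$-torsion. Consequently $\ker(\tau_K^M \circ \phi_K) = \ker \phi_K$, yielding the asserted bound on the kernel.

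The bulk of the work is to show that the image of $\tau_K^M \circ \phi_K$ lies in $\Sha(A/M)$. Fix $x \in B^\chi(K)$ and pick $y \in B^\chi(\bar K)$ with $ny = x$; then $\sigma \mapsto \sigma y - y$ takes values in $B^\chi[n] \subset A^\chi$ and represents $\phi_K(x)$, while $\pi(y) \in (J^\chi/A^\chi)(\bar K)$ is $K$-rational (being independent of the choice of $y$ modulo $B^\chi[n]$). Writing $v = w|_K$ for a place $w$ of $M$, I distinguish three cases. When $v \nmid n$ and both $A^\chi, B^\chi$ have good reduction at $v$, the cocycle is unramified at $v$ and $H^1_{\mathrm{ur}}(K_v, A^\chi) = 0$ by good reduction, so $\phi_K(x)_{K_v}$ already vanishes. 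When $v \nmid n$ but $A^\chi$ or $B^\chi$ has bad reduction at $v$, hypothesis (ii) on the Tamagawa numbers combined with the N\'eron-model argument from the proof of Theorem~\ref{improv} forces vanishing of $\phi_K(x)_{K_v}$.

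The main obstacle is the case $v \mid n$, where the twist can destroy good reduction of $A^\chi$ at $v$ over $K$, so Lemma~\ref{smooth1} cannot be applied directly to $A^\chi/K_v$. The resolution is to defer the local check to $M_w$. Over $M$ the twist is trivial, and by Assumptions~\ref{assumption}(b,c) the untwisted sequence $0 \to A \to J \to J/A \to 0$ has good reduction at $w$ with $e_p(L) < p-1$, so Lemma~\ref{smooth1} produces the surjection $J(M_w^{\mathrm{ur}}) \twoheadrightarrow (J/A)(M_w^{\mathrm{ur}})$. Lifting the $M_w$-rational point $\pi(y)$ to some $\tilde y \in J(M_w^{\mathrm{ur}})$, the difference $z := y - \tilde y \in A(\bar M_w)$ satisfies $\sigma y - y = \sigma z - z$ for every $\sigma$ in the inertia group at $w$, so the class $\tau_K^M(\phi_K(x))_{M_w}$ is unramified. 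Finally, $H^1_{\mathrm{ur}}(M_w, A) = 0$ (Lang's theorem on the reduction of $A$ together with vanishing of $H^1$ of the formal group) forces this local class to vanish, completing the verification.
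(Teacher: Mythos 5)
Your proposal follows the paper's proof essentially step by step: apply Lemma~\ref{vis} to the twisted triple over $K$ (using the base change to $M$ to verify the torsion hypotheses), control the kernel via $\ker\tau_K^M \subset H^1(\gal(M/K),A^\chi(M))$ which has no $n$-torsion by hypothesis~(iii), handle finite $v\nmid n$ via the Tamagawa hypothesis exactly as in Theorem~\ref{improv}, and defer the $v\mid n$ check to $M_w$ where the twist disappears and Lemma~\ref{smooth1} applies to the untwisted sequence over $L$; your explicit cocycle-lifting argument at $w\mid n$ is a spelled-out version of the paper's appeal to ``an argument similar to [AS, Theorem 3.1].'' One small omission: you never address Archimedean places of $M$ --- the paper dispatches these by noting that $\kappa_w\tau\phi(x)$ is simultaneously killed by $2$ and by the odd integer $n$, hence vanishes --- so add that one line to make the case analysis complete.
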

\begin{proof}
We shall write $\tau$ to denote $\tau^M_K$ and $\phi$ to denote $\phi_K$ for simplicity.  Since $B(M)[n]=0$ and $B^\chi\cong B$ over $M$,  $B^\chi(M)[n]=0$. Therefore $B^\chi(K)[n]=0$. Similarly, $(J^\chi/A^\chi)(K)[n]=0$.  From \ref{vis} we have a natural map
\[    \phi_K  :  B^\chi(K)/nB^\chi(K) \lrta \Vis_{J^\chi}(K,A^\chi)  . \]
 Let $x\in B^\chi(K)$.
For a prime $v$ of $K$, let $\kappa_v(K)$ denotes the restriction map
\[  H^1(K,A^\chi)  \lrta H^1(K_v,A^\chi),  \]
and for prime $w$ of $M$, let $\kappa_w(M)$ denotes the corresponding restriction map for $M$. 
For a prime $w|v$, let $\tau_w$ be the restriction map $H^1(K_v,A) \lrta H^1(M_w,A)$. 
Let $v\nmid n$ be a finite prime of $K$. Since $n$ is coprime to $\prod_v c_{v,A^\chi}(K)c_{v,B^\chi}(K)$, from \cite[Theorem 3.1]{AS} we have that $\kappa_v(K)\phi(x)=0$ which implies that $\tau_w\kappa_v\phi(x)=0$. Therefore $\kappa_w(M)\tau\phi(x)=0$. Thus, the image of $\tau\phi$ is contained in the kernel of $\kappa_w$ for every prime $w\nmid n$ of $M$. 

If $v$ is an infinite prime of $K$ then the order of $\kappa_v(M)\phi(x)$ divides $2$. On the other hand $\phi(x)$ is killed by $n$. Now using the assumption that $n$ is an odd number, we get that $\kappa_v(M)\phi(x)=0$. Once again this implies that  $\tau\phi(x)$ is contained in the kernel of $\kappa_w(M)$ for every infinite prime $w$. Finally, we consider a prime $v|n$ of $K$. Since $A^\chi$ nor $B^\chi$ may be defined over $L$, we cannot use the method of \cite[Theorem 3.1]{AS} to say that  $\kappa_v(K)\phi(x)=0$. Instead, we notice that the map $\tau\phi$ factors through the natural map
\[   B^\chi(K)/nB^\chi(K) \stackrel{\psi}{\lrta} B^\chi(M)/nB^\chi(M) \stackrel{\phi_M}{\lrta} \Vis_{J^\chi}(M,A^\chi)   \] 
where the first map $\psi$ is induced by the natural inclusion map $B(K)\lrta B(M)$. 
Also $A$, $B$ and $J$ are naturally isomorphic to $A^\chi$, $B^\chi$ and $J^\chi$ respectively. Therefore, it is enough to show that $\phi_M\psi(x)$ is in the kernel of $\kappa_w$ for every prime $w|n$. We shall show that for every $y\in B(M)$ and primes $w|n$, $\kappa_w\phi_M(y)=0$. This will imply that the image of $\phi_M\psi$ is contained in the kernel of $\kappa_w$. 

Let $C$ be the abelian variety  $J/A$ defined over $L$. Since ramification index $e_p$ for every prime of $L$ dividing $n$ satisfies $e_p<p-1$,  $A$ and $B$ are defined over $L$, from Lemma \ref{smooth1} we get that the sequence 
\[  0 \lrta  A(M_w^{ur}) \lrta J(M_w^{ur}) \lrta C(M_w^{ur}) \lrta 0  \]
is exact. Now it follows from an argument similar to the proof of \cite[Theorem 3.1]{AS} that the image of an element  $y$ of $B(M)$ in $H^1(M_w,A)$ is zero. Taking $y=\psi(x)$, we get that the image of $x$ in $H^1(M_w,A)$ is zero. Thus we have shown that the image of $x$ is contained in the kernel of $\kappa_w$ for every prime $w$ of $M$. This proves the first part of the theorem. 

From snake lemma we get the exact sequence,
\[  0 \lrta ker(\phi) \lrta ker(\tau\phi) \lrta ker(\tau)  . \]
The kernel of $\tau$ is a subgroup of $H^1(M/K,A(M)$. Since the order of  $\gal(M/K)$ is  coprime to $n$,   $H^1(M/K,A(M)[n]=0$. Therefore $ker(\tau)[n]=0$. This implies that the cardinality of the kernel of  $\phi$ is same as the cardinality of the kernel of $\tau\phi$. Now it follows from \cite[Lemma 3.7]{AS} that the cardinality of  the kernel of $\phi$ is bounded by $n^r$. This completes the proof of theorem.  
\end{proof}

\begin{remark}\label{const}
Let $A$ and $B$ be abelian varieties  defined over a number field $L$ such that $A[p]\cong B[p]$ as $\gal(\bar{L}/L)$-modules for a prime number $p$.  Put $J=(A\oplus B)/A[p]$ where we view  $A[p]$ as a subvariety of $A\oplus B$ via the diagonal embedding.   We view $A$ and $B$ as an abelian sub variety of $J$ via the embedding 
\[  A \hookrightarrow A\oplus B  \twoheadrightarrow (A\oplus B)/A[p]  \]  
and 
\[  B \hookrightarrow A\oplus B  \twoheadrightarrow (A\oplus B)/A[p]  \]  
 respectively.  Note that under this map $A[p]$ maps to $A\cap B=A[p]=B[p]$ in $J$. 
Now, suppose that for an extension $K$ of $L$, $A[p]$ is an irreducible $\gal(\bar{K}/K)$-module. This in particular implies that $A(K)[p]=0$. Further, the embedding $B \hookrightarrow J$ induces an isogeny from $B\lrta J/A$. Since $A[p]$ is an irreducible $\gal(\bar{K}/K)$-module and $B$ is isogenous to $J/A$, we get that $(J/A)[p]$ is also an irreducible $\gal(\bar{K}/K)$-module. Therefore $(J/A)(K)[p]=0$. In particular, we see that  $A$, $B$ and $J$  satisfies Assumption \ref{assumption}(a) and (d) over $K$ for $n=p$.  
\end{remark}

Before we discuss the implications of the above theorem in next section, consider the following example.\\
\begin{example}\label{ex1}
{ \normalfont
Let $E_1$ and $E_2$ be the familiar pair of elliptic curves from \cite{GV} defined by the equations
\[  E_1 :  y^2 = x^3 +x - 10 \]
and 
\[  E_2 : y^2 =  x^3 - 584 x + 5444 \]
respectively over $\QQ$. The conductor of $E_1$ over $\QQ$ is $52=4\times 13$ and the conductor of $E_2$ over $\QQ$ is $364=4\times 7 \times 13$. We have that $E_1[p]\cong E_2[p]$ as $\gal(\bar{\QQ}/\QQ)$-module for $p=5$.  
The Tamagawa number $c_{v,E_1}$ and $c_{v,E_2}$ of $E_1$ and $E_2$ respectively are $5$-adic unit for every prime $v|52$. But the Tamagawa number of $E_2$ at $7$ is $5$.
Let $\chi$ be the quadratic character associated to the quadratic extension $\QQ(\sqrt{59})/\QQ$.
The Weierstrass equation for $E_1^\chi$ is given by 
\[  E_1^\chi :  y^2 =  x^3 -  3481 x + 2053790  \]
and the Weierstrass equation for $E_2^\chi$ is given by
\[   E_2^\chi  :  y^2 =  x^3 - 2032904 x + 1118083276 .  \]
The conductor of $E_1^\chi$ is equal to $724048= 2^2\times 13\times 59^2$ and conductor of $E_2^\chi$ is equal to $5068336 = 2^2\times 13\times 7 \times 59^2$.  The Tamagawa number of $E_1^\chi$ (resp. $E_2^\chi$) are $5$-adic units at all primes of bad reduction.  Put $J=(E_1\oplus E_2)/E[5]$ where we view  $E[5]$ as a subvariety of $E_1\oplus E_2$ via the diagonal embedding.   
 It can be checked that $E_1[5]$ is an irreducible $\gal(\bar{M}/M)$-module. From Remark \ref{const}, we get that $E_1$, $E_2$ and $J$ satisfy all the assumptions of Theorem \ref{quadratic}.  Thus we get a map 
\[  \tau^M_K\phi_K : E_2^\chi(\QQ) \lrta  \Vis_J({\cyr \Sha}(E_1/\QQ(\sqrt{59})))    \]
as in Theorem \ref{quadratic}.  Using Sage we get that the rank of $E_1^\chi$ is zero and the rank of $E_2^\chi$ is two. In particular, Theorem \ref{quadratic} implies that the image of $\tau^M_K\phi_K$ is a visible subgroup of ${\cyr \Sha}(E_1/\QQ(\sqrt{59})$ of order $25$. 
}
\end{example}
\qed

\section{Tamagawa numbers and visibility}\label{tam}
As in the previous section, we consider a number field $L$ and a Galois extension $K/L$. Fix an odd prime $p$.  
Next we state a lemma on vanishing of $p$-part of the Tamagawa number of an elliptic curve $E$  at a prime $v\nmid p$ of $K$ satisfying the assumption that the prime to $p$ conductor of $E[p]$ as $\gal(\bar{K}_v/K_v)$-module is same as the conductor of  of $E$ at $v$. Recall that the conductor of $E$  at a prime $v$ is same as the conductor of the $p$-adic Galois representation associated to $E$ for $v\nmid p$ which is in fact independent of the choice of prime $p$. For the definition of the conductor of  Galois module $E[p]$ and the $p$-adic Galois representation associated to an elliptic curve, we refer the reader to \cite[Definition 3.2.27]{Wi}.

 \begin{lemma}\label{compare}
 Let $E$  be an elliptic curve defined over a number field $K$ and  $v\nmid p$ be a finite prime of $K$. Suppose  that the conductor of $E[p]$ as a module over the Galois group of $\gal(\bar{K}_v/K_v)$ is same as the conductor of  $E$ at $v$. Then the Tamagawa number $c_v(E)$ over $K$ is a $p$-adic unit.
   \end{lemma}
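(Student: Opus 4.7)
The plan is to case-split on the reduction type of $E$ at $v$ and show that in each case the divisibility $p\mid c_v(E)$ would force the Artin conductor of the $\gal(\bar{K}_v/K_v)$-module $E[p]$ to drop strictly below that of $E$, contradicting the hypothesis.

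If $E$ has good reduction at $v$, then $c_v(E)=1$ and the conclusion is immediate. Suppose next that $E$ has multiplicative reduction at $v$, so $f_v(E)=1$. In the non-split case $c_v(E)\in\{1,2\}$, which is automatically coprime to the odd prime $p$. In the split case I invoke Tate's uniformization, which produces a $\gal(\bar{K}_v/K_v)$-equivariant short exact sequence
\[ 0\to \mu_p\to E[p]\to \mathbb{Z}/p\mathbb{Z}\to 0, \]
in which $\mu_p$ is unramified (since $v\nmid p$). The quotient is unramified if and only if $q_E$ admits a $p$-th root in $K_v^{ur}$, i.e.\ if and only if $p\mid v(q_E)$. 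Should this occur, the entire module $E[p]$ would be unramified and $f_v(E[p])=0<1=f_v(E)$, contradicting the hypothesis. Therefore $p\nmid v(q_E)=c_v(E)$, as required.

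Finally, suppose $E$ has additive reduction at $v$, so that $f_v(E)\geq 2$ and $(V_pE)^{I_v}=0$. Kodaira's classification of special fibres forces the component group to have order dividing $4$ in every case except types $IV$ and $IV^*$, where it has order dividing $3$. Since $p$ is odd, the assumption $p\mid c_v(E)$ therefore forces $p=3$ and Kodaira type $IV$ or $IV^*$, with $c_v(E)=3$. In that case $E$ attains good reduction over a tamely ramified cyclic cubic extension of $K_v$, so the image of $I_v$ in $\Aut(E[3])\cong\GL_2(\mathbb{F}_3)$ is cyclic of order $3$. Any non-identity element of such a subgroup is a single unipotent Jordan block, so $\dim_{\mathbb{F}_3}E[3]^{I_v}=1$. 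The action being tame, the Swan conductor vanishes, whence $f_v(E[3])=2-1=1<2\leq f_v(E)$, once again contradicting the hypothesis.

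The main technical subtlety is to verify, in the additive case when the residue characteristic of $v$ equals $2$, that the wild inertia truly acts trivially on $E[3]$, so that the action is tame. This will follow from the fact that potential good reduction over a tame cubic extension forces the wild inertia---a pro-$2$ group---to act trivially on $V_3E$, and hence on $E[3]$. The remaining ingredients (Tate's uniformization, the table of component-group orders by Kodaira type, and the Artin conductor formula $f_v(V)=\dim V-\dim V^{I_v}+\mathrm{Swan}(V)$) are standard and can be cited from Silverman's \emph{Advanced Topics} and from Serre's conductor formulas.
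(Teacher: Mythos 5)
Your approach is genuinely different from the paper's. The paper argues cohomologically: it invokes Mazur--Rubin \cite[Lemma~5.4]{MR} to get $H^0(I_v,E)/p=H^0(I_v,E(p))/p$, then Emerton--Pollack--Weston \cite[Lemma~4.1.2]{EPW} to show that the conductor hypothesis forces $\dim_{\ZZ/p}(E[p])^{I_v}=\dim_{\QQ_p}(V_pE)^{I_v}$, so that $E(p)^{I_v}$ is $p$-divisible; a long exact sequence and $cd_p\gal(K_v^{ur}/K_v)=1$ then make $H^1(K_v^{ur}/K_v,E^{I_v})$ simultaneously $p$-divisible and finite of order $c_v(E)$. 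You instead argue by the contrapositive via a case analysis on reduction type. Your good, split-multiplicative, and non-split-multiplicative cases are all correct (and more elementary than the paper's machinery).

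There is, however, a genuine gap in your additive case when the residue characteristic of $v$ is $2$. You correctly reduce to $p=3$ and Kodaira type IV or IV*, but you then assert that $E$ attains good reduction over a \emph{tamely ramified} cyclic cubic extension. For residue characteristic $\geq 5$ this is standard, but in residue characteristic $2$ the inertia image on $V_3E$ at a type IV/IV* place need not be cyclic of order $3$ a priori: Kodaira type is a statement about the special fibre, not directly about the size of the inertia image, and wild possibilities exist. Your attempted repair --- ``potential good reduction over a tame cubic extension forces the wild inertia to act trivially'' --- is circular: it assumes the very tameness of the extension that you need to justify.

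The fix is short and actually streamlines your additive case. Since $v\nmid p$, the identity component $E_0(K_v^{ur})$ is uniquely $p$-divisible (both the formal group and $\mathbb{G}_a(\bar{k}_v)$ are), so the snake lemma on $0\to E_0(K_v^{ur})\to E(K_v^{ur})\to\Phi(\bar{k}_v)\to 0$ gives a canonical isomorphism $E[p]^{I_v}\cong\Phi(\bar{k}_v)[p]$. Thus $p\mid c_v(E)=\#\Phi(k_v)$ forces $\dim_{\ZZ/p}E[p]^{I_v}\geq 1$, while $\dim_{\QQ_p}(V_pE)^{I_v}=0$ for additive reduction. Since the wild inertia acts through a finite group of order prime to $p$, the reduction $\GL_2(\ZZ_p)\to\GL_2(\FF_p)$ is injective on its image and the Swan conductors of $V_pE$ and $E[p]$ coincide; hence $f_v(E[p])<f_v(E)$ with no appeal to Kodaira types or tameness. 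The same identity $f_v(E)-f_v(E[p])=\dim_{\ZZ/p}E[p]^{I_v}-\dim_{\QQ_p}(V_pE)^{I_v}$ also subsumes your multiplicative-case computations and makes the whole proof uniform.
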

 \begin{proof}
 Let $I_v$ denote the inertia subgroup of $\gal(\bar{K}_v/K_v)$ and  $E(p)$  denote the $p$-primary Galois submodule of $E(\bar{K})$. From the proof of \cite[Lemma 5.4]{MR} we have that $H^0(F,E)/p=H^0(F,E(p))/p$ for every every finite extension $F$ of $K_v$. By taking direct limit over the unramified extensions of $K_v$, we get that $H^0(I_v,E)/p=H^0(I_v,E(p))/p$.   Since the conductor of $E[p]$ as a module over $\gal(\bar{K}_v/K_v)$ is equal to the conductor of  $E$, it follows from the proof of   \cite[Lemma 4.1.2] {EPW}  that $H^0(I_v,E(p))$ is a $p$-divisible abelian group.  We mention that  in \cite[Lemma 4.1.2] {EPW}  the authors consider the case when $K=\QQ$. But a similar argument works for a general number field $K$. In fact, by an argument similar to \cite[Lemma 4.1.2] {EPW},  it can be shown that 
 \[  dim_{\ZZ/p}(E[p])^{I_v} = dim_{\QQ_p}(V_pE)^{I_v}  \]
 where $V_pE$ denotes the $p$-adic Galois representation associated to $E$. This in particular implies that $E(p)^{I_v}$ is $p$-divisible. 
 This implies that $H^0(I_v,E(p))/p=H^0(I_v,E)/p=0$. We consider the exact sequence
 \[   0 \lrta H^0(I_v,E[p]) \lrta H^0(I_v,E) \stackrel{p}{\lrta} H^0(I_v,E) \lrta 0. \]
The second map in the exact sequence is given by multiplication by $p$. Note that here we have used the fact that $H^0(I_v,E)[p]=H^0(I_v,E[p])$.   From  the associated long exact cohomology sequence, we have the exact sequence
\begin{align*}
  H^1(K^{ur}_v/K_v,E^{I_v}[p]) \lrta H^1(K^{ur}_v/K_v, & E^{I_v})  \stackrel{p}{\lrta} H^1(K^{ur}_v/K_v,E^{I_v})  \\
 &  \lrta H^2(K^{ur}_v/K_v,E^{I_v}[p]) .
 \end{align*}
Since $\gal(K^{ur}_v/K_v)$ has $p$-cohomological dimension $1$, $H^2(K^{ur}_v/K_v,E^{I_v}[p])=0$. This implies that $H^1(K^{ur}_v/K_v,E^{I_v})$ is $p$-divisible. But we also know that $H^1(K^{ur}_v/K_v,E^{I_v})$ is a finite group of cardinality $c_v(E)$. Therefore  $c_v(E)$ is a $p$-adic unit.  
 \end{proof}

Throughout this section,  we shall  assume that  $e_p=e_p(L)<p-1$ (see Assumption \ref{assumption}). We also  fix  a pair of elliptic curves $A$ and $B$ defined over $L$ such that $A[p]\cong B[p]$ as $\gal(\bar{L}/L)$-modules.   Put $J=(A\oplus B)/A[p]$, where we view $A[p]$ as a subgroup scheme of $A\oplus B$ via the diagonal embedding. Throughout this section we shall assume that $A[p]$ is an irreducible $\gal(\bar{K}/K)$-module.  In particular this assumption implies that $A(K)[p]=(J/A)(K)[p]=0$.
\begin{theorem}\label{nontrivial}
Suppose that the prime to $p$-conductor of $A[p]$ is same as the conductor of $A$ and $B$ respectively over $K$.   If the Mordell-Weil rank of $B$ is greater then the Mordell-Weil rank of $A$ over $K$ then ${\cyr \Sha}(A/K)$ contains an element of order $p$.
\end{theorem}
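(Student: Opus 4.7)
The plan is to realise the desired element of ${\cyr \Sha}(A/K)$ as the image of a non-zero class in $B(K)/pB(K)$ under the visibility map produced by Theorem \ref{improv}, applied with $n=p$ to the triple $(A,B,J)$ with $J=(A\oplus B)/A[p]$. Under the diagonal embedding that defines $J$ (see Remark \ref{const}), $A$ and $B$ are abelian subvarieties of $J$ meeting in the finite subgroup $A[p]=B[p]\subset A$, so the structural hypothesis of Assumption \ref{assumption}(a) is automatic, and the coprimality $\gcd(p,N(L))=1$ together with $e_p(L)<p-1$ are standing assumptions of this section. The irreducibility of $A[p]$ as a $\gal(\bar K/K)$-module gives $B(K)[p]=0$; the same irreducibility, applied to the symmetric identification $(J/B)[p]\cong A[p]$ coming from the isogeny $A\to J/B$ with kernel $A[p]$ (exactly as in Remark \ref{const}), gives $(J/B)(K)[p]=0$.

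The technical heart of the proof is verifying that $p$ is coprime to every Tamagawa number $c_{v,A}(K)$ and $c_{v,B}(K)$, which is what upgrades the target of the map in Theorem \ref{improv} from $\Vis_J(K,A)$ to $\Vis_J({\cyr \Sha}(A/K))$. I would split the verification into two cases. For finite primes $v\mid p$ the elliptic curves $A$ and $B$ have good reduction, so the Tamagawa numbers are trivial. For $v\nmid p$ the conductor hypothesis of Theorem \ref{nontrivial}, which asserts that the prime-to-$p$ conductor of $A[p]$ agrees with the conductor of $A$ (respectively $B$) over $K$, is precisely the input required by Lemma \ref{compare}, and one concludes $c_{v,A}(K), c_{v,B}(K)\in\ZZ_p^{\times}$.

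With all the hypotheses of Theorem \ref{improv} in place, that theorem produces a homomorphism
\[ \phi_K\colon B(K)/pB(K)\longrightarrow \Vis_J({\cyr \Sha}(A/K)) \]
whose kernel has cardinality at most $p^{r_A}$, where $r_A$ denotes the Mordell--Weil rank of $A(K)$. A dimension count in $\FF_p$-vector spaces then finishes the argument: by Mordell--Weil $\dim_{\FF_p}B(K)/pB(K)\geq r_B$, where $r_B$ is the rank of $B(K)$, and by hypothesis $r_B>r_A$. Hence the image of $\phi_K$ has $\FF_p$-dimension at least $r_B-r_A\geq 1$, producing a non-zero element of $\Vis_J({\cyr \Sha}(A/K))\subset {\cyr \Sha}(A/K)[p]$, as required.

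The main obstacle is the Tamagawa analysis at primes $v\nmid p$, which is why Lemma \ref{compare} was developed separately; with that lemma in hand the remaining work is bookkeeping around Theorem \ref{improv}, whose extension via Lemma \ref{smooth1} is what permits us to work over a $K$ carrying no ramification constraint at $p$ beyond the standing $e_p(L)<p-1$.
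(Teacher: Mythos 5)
Your proof is correct and follows the same path as the paper's: form $J=(A\oplus B)/A[p]$, invoke Lemma \ref{compare} to control the Tamagawa numbers, and feed the result into Theorem \ref{improv}, concluding by the rank inequality. The paper's own proof is essentially the one-line version of this; you have simply spelled out the verification of the hypotheses of Theorem \ref{improv} (in particular the case $v\mid p$, which Lemma \ref{compare} does not cover and is handled by good reduction, and the identification $J/B\cong A$ giving $(J/B)(K)[p]=0$) and made the final counting argument explicit.
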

\begin{proof}
We consider the abelian variety $J=(A\oplus B)/A[p]$ where $A[p]$ is considered as the subgroup scheme of $A\oplus B$ via the diagonal embedding. From Lemma \ref{compare} we have that $c_{v,A}(K)$ and $c_{v,B}(K)$ are $p$-adic units for every finite prime $v$ of  $K$.   Thus  the theorem follows from Theorem \ref{improv}.
\end{proof}

For an abelian variety $A$ over $K$, let $\bar{\n}(K)$ denote the prime to $p$ conductor of the $\gal(\bar{K}/K)$-module $A[p]$ over $K$ and  $\n_A(K)$ denotes the conductor  $A$ over $K$. Note that if $A$ is a semi-stable elliptic curve over $K$ then $N_A(K)=\n_A(K)$.  For an abelian variety $A$ defined over $\QQ$,  we shall denote $\bar{\n}_A(\QQ)$ by $\bar{\n}_A$ and $\n_A(\QQ)$ by $\n_A$. Also we shall denote $N_A(\QQ)$ by $N_A$. Then more generally we have,

\begin{theorem}\label{nontrivial1}
Suppose that $A$ and $B$ elliptic curves defined over  $L$ with good reduction at primes dividing $p$  such that $A[p]\cong B[p]$ as $\gal(\bar{L}/L)$-modules.  Further we assume   that \\
(a) if $v$ is prime of $K$ dividing $\n_A(K)/\bar{\n}(K)$ (resp. $\n_B(K)/\bar{\n}(K)$) then $A$ (resp. $B$) has non-split multiplicative reduction at $v$. \\
(b)  $A$ and $B$ are semi-stable over $K$ .\\
(c) $A[p]$ is an irreducible $\gal(\bar{K}/K)$-module. \\
(d)  The Mordell-Weil rank of $B$ is greater then the Mordell-Weil rank of $A$ over $K$. \\
 Then ${\cyr \Sha}(E/K)$ contains an element of order $p$.
\end{theorem}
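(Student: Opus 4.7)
The plan is to apply Theorem \ref{improv} with $n=p$ to the configuration of Remark \ref{const}. Set $J = (A \oplus B)/A[p]$ over $L$, with $A$ and $B$ embedded into $J$ as described there. Then $A \cap B = A[p]$ is finite and $B[p] \subset A$ inside $J$, so the setup of Theorem \ref{improv} is in force. Hypothesis (b) of Theorem \ref{nontrivial1} gives $e_p(L) < p-1$, while the good reduction of $A$ and $B$ at primes above $p$ yields $\gcd(p, N(L)) = 1$. Hypothesis (c) says $A[p]$ is irreducible as a $\gal(\bar{K}/K)$-module, so $A(K)[p] = 0$ and hence $B(K)[p] = 0$ through the isomorphism $A[p] \cong B[p]$; the symmetric version of the argument in Remark \ref{const}---using the isogeny $A \to J/B$ induced by the embedding $A \hookrightarrow J$---gives $(J/B)(K)[p] = 0$.

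The heart of the proof is the Tamagawa verification needed for the second conclusion of Theorem \ref{improv}: I must show $p \nmid c_{v,A}(K)$ and $p \nmid c_{v,B}(K)$ for every finite prime $v$ of $K$. I handle $A$ first and split into two cases. If $v$ is a prime of bad reduction of $A$ with $v \nmid \n_A(K)/\bar{\n}(K)$, then the exponent of $v$ in $\n_A(K)$ equals the exponent of $v$ in $\bar{\n}(K)$, so the local conductor of $A[p]$ at $v$ coincides with the conductor of $A$ at $v$; Lemma \ref{compare} then gives $p \nmid c_{v,A}(K)$. Otherwise $v \mid \n_A(K)/\bar{\n}(K)$, and hypothesis (a) forces $A$ to have non-split multiplicative reduction at $v$. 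The component group of the N\'eron model is then either trivial or $\ZZ/2\ZZ$, so $c_{v,A}(K) \in \{1,2\}$, a $p$-adic unit because $p$ is odd. An identical dichotomy handles $B$. I expect this step to be the main obstacle: at a prime of \emph{split} multiplicative reduction with $A[p]$ unramified, the Tamagawa number equals $\mathrm{ord}_v(q_A)$ and is divisible by $p$, so hypothesis (a) is exactly what is needed to exclude this obstruction.

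With all hypotheses of Theorem \ref{improv} verified, that theorem produces a natural homomorphism
\[
\phi_K : B(K)/pB(K) \longrightarrow \Vis_J({\cyr \Sha}(A/K))
\]
whose kernel has order at most $p^{r}$, where $r = \rk A(K)$. Since $B(K)[p]=0$, the $\FF_p$-dimension of $B(K)/pB(K)$ equals $\rk B(K)$, which by hypothesis (d) strictly exceeds $r$. Therefore $\phi_K$ has nontrivial image, producing the desired element of order $p$ in ${\cyr \Sha}(A/K)$.
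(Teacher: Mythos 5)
Your proposal is correct and follows essentially the same route as the paper: take $J=(A\oplus B)/A[p]$ as in Remark \ref{const}, use semi-stability together with hypothesis (a) to invoke Lemma \ref{compare} for the Tamagawa verification (handling the non-split multiplicative primes directly), and then apply Theorem \ref{improv} plus the rank inequality. You are, if anything, slightly more careful than the paper at one point: Remark \ref{const} actually checks $A(K)[p]=0$ and $(J/A)(K)[p]=0$, whereas the hypotheses of Theorem \ref{improv} need $B(K)[p]=0$ and $(J/B)(K)[p]=0$; your explicit appeal to the symmetric isogeny $A\to J/B$ closes that small gap cleanly.
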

\begin{proof}
Let $v$ be a prime of $K$ dividing $\bar{\n}(K)$. Since $A$ is a semi-stable elliptic curve, $\n_A(K)$  (resp. $\n_B(K)$) are square free and therefore the conductor of $A$ (resp. $B$) at  $v$  over $K$ is equal to the conductor of $A[p]$ at $v$. Therefore from Lemma \ref{compare}  the Tamagawa number $c_{v,A}(K)$ (resp. $c_{v,B}(K)$ ) of $A$ (resp. $B$) over $K$ is a $p$-adic unit. Now we consider the case when $v|\n_A(K)/\bar{\n}(K)$.  Since $A$ has non-split multiplicative reduction at $v$, again we get that $c_{v,A}(K)$ is a $p$-adic unit. Similarly if $v|\n_B(K)/\bar{\n}(K)$ then $c_{v,B}(K)$ is a $p$-adic unit. Therefore the Tamagawa numbers of $A$ and $B$ are $p$-adic units over $K$. Now the theorem follows by an argument similar to Theorem \ref{nontrivial}. 
\end{proof}

In this context we mention that Theorem \ref{nontrivial} can be considered as a generalisation of  \cite[Theorem 1.3]{A} where author considers modular abelian varieties $A$ and $B$  of prime conductors defined over $K=\QQ$. Note that if $A$ and $B$ are elliptic curves,  $A[p]\cong B[p]$ is an irreducible $\gal(\bar{K}/K)$-module and the conductor of $A$ and $B$ are prime numbers then the prime to $p$ conductor of $A[p]$ is same as the conductor of $A$ and $B$.    In the Lemma \ref{compare} we have considered elliptic curves $A$ and $B$.  To prove Lemma \ref{compare} we have crucially used  \cite[Lemma 5.4]{MR}  and  \cite[Lemma 4.1.2] {EPW}. In fact,  \cite[Lemma 4.1.2] {EPW} is already stated for the Galois representation associated to general Hecke eigen forms and also one can easily prove \cite[Lemma 5.4]{MR} for abelian varieties.  In particular, the assertion of Theorem \ref{nontrivial}  holds for modular abelian varieties $A$ and $B$ also.   

For an elliptic curve $E$ defined over $\QQ$, let $L(E,s)$ be the Hasse-Weil $L$-function of $E$ defined over $\QQ$, $\Omega_E$ be the N\'eron period of $E$ over $\QQ$ and $R(E/\QQ)$ denotes the regulator of $E$ over $\QQ$. For a prime $p$, let 
\[\bar{\rho}_p(E) : \gal(\bar{\QQ}/\QQ) \lrta Aut_{\ZZ/p}(E[p]) \]
denote the residual Galois representation associated to $E$ at $p$. 

 We mention that it is well known   that for an elliptic curve $E/\QQ$ of rank zero   $\frac{L(E/\QQ)}{\Omega_E}$ is in $\QQ$. For an elliptic curve $E/\QQ$ of rank one a Theorem of Gross-Zagier implies that  $\frac{L'(E/\QQ)}{R(E/\QQ)\times \Omega_E}$ is in $\QQ$, where $L'(E,s)$ denotes the first derivative  $L(E,s)$.  For a detailed discussion on these results see \cite[Theorem 5.3]{JC}. In fact, a recent work of Christian Wuthrich implies that if   $E/\QQ$ is an elliptic curve with semistable reduction at an odd prime $p$  such that $\bar{\rho}_p(E)$ is an irreducible Galois representation then $\frac{L(E/\QQ)}{\Omega_E}$ is a $p$-adic integer if $L(E,1)\neq 0$ and $\frac{L'(E/\QQ)}{R(E/\QQ)\times \Omega_E}$ is a $p$-adic integer if $L(E,s)$ has a simple zero at $1$ (see \cite{Wu}).    We recall the following well known theorem (see for example \cite[Theorem 9.1]{SW}).

\begin{theorem}(Kato, Perrin-Riou)\label{zero}
Let $E/\QQ$ be an elliptic curve with $L(E,1)\neq 0$. Then ${\cyr \Sha}(E/\QQ)$ and $E(\QQ)$ are  finite and 
\begin{equation}\label{div}
  \# {\cyr \Sha}(E/\QQ)  \   |   \   C\times \frac{L(E,1)}{\Omega_E} \times \frac{\#E(\QQ))^2}{\prod c_{v,E}(\QQ)} 
\end{equation}
where $C$ is a product of a power of $2$, power of primes of additive reduction and power of primes for which the image of the Galois representation
\[  \bar{\rho}_p(E) :   \gal(\bar{\QQ}/\QQ) \lrta Aut_{\ZZ/p}(E[p])  \]
is not surjective. 
\end{theorem}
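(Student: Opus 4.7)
The plan is to prove the divisibility \eqref{div} one prime $p$ at a time: for each odd prime $p$ at which $E$ has good or multiplicative reduction and such that $\bar\rho_p(E)$ is surjective (i.e.\ those $p$ not dividing the fudge factor $C$), I would bound the $p$-adic valuation of $\#{\cyr \Sha}(E/\QQ)$ in terms of the $p$-adic valuations of $L(E,1)/\Omega_E$, $\#E(\QQ)$, and the Tamagawa numbers $c_{v,E}(\QQ)$. Combined with the Mordell-Weil theorem, this prime-by-prime bound also forces $\#{\cyr \Sha}(E/\QQ)[p^\infty]$ to be finite for all but finitely many $p$, from which one extracts the finiteness statement.

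The core input is Kato's construction of an Euler system of Beilinson-Kato elements in the Galois cohomology of the $p$-adic Tate module $T_pE$, together with Perrin-Riou's explicit reciprocity law. Using the Euler system machinery in the style of Kolyvagin-Rubin, Kato's elements produce a divisibility inside the Iwasawa algebra $\Lambda = \ZZ_p[[\gal(\QQ_\infty/\QQ)]]$, where $\QQ_\infty$ denotes the cyclotomic $\ZZ_p$-extension of $\QQ$: the characteristic ideal of the Pontryagin dual of the $p$-primary Selmer group over $\QQ_\infty$ divides the ideal generated by the $p$-adic $L$-function of Mazur--Swinnerton-Dyer in the good ordinary case (or its Perrin-Riou variant in the supersingular case, or Mazur-Tate-Teitelbaum in the multiplicative case). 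This is one divisibility of the cyclotomic Iwasawa Main Conjecture for $E$.

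Having this Iwasawa-theoretic statement, the next step is to descend from $\QQ_\infty$ to $\QQ$ by specializing at the trivial character of $\gal(\QQ_\infty/\QQ)$. The interpolation formula for the $p$-adic $L$-function identifies its value at the trivial character, up to an explicit $p$-adic unit Euler factor, with $L(E,1)/\Omega_E$; the hypothesis $L(E,1)\neq 0$ guarantees this value is a nonzero $p$-adic number. A control theorem of Mazur comparing ${\cyr \Sha}(E/\QQ)[p^\infty]$ with $\gal(\QQ_\infty/\QQ)$-invariants of the Iwasawa Selmer group over $\QQ_\infty$ introduces correction terms, and the explicit form of these corrections involves precisely the local Tamagawa factors $c_{v,E}(\QQ)$ and the torsion contribution $\#E(\QQ)[p^\infty]^2$, yielding the shape of \eqref{div}.

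The main obstacle is of course the construction of Kato's Euler system and the proof of the explicit reciprocity law; these are deep results which I would cite rather than reproduce. A secondary technical difficulty is treating the good ordinary, good supersingular, and multiplicative reduction cases at $p$ uniformly in the descent step, since each requires a slightly different formulation of the $p$-adic $L$-function and a correspondingly modified local condition defining the Selmer group. It is precisely at these reduction-type boundaries, together with the bad behaviour when $\bar\rho_p(E)$ fails to be surjective, that the ambiguous constant $C$ in \eqref{div} absorbs the discrepancy between the sharp Euler system bound and the clean BSD-shaped right-hand side.
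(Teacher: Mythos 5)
The paper does not prove this theorem at all: it is quoted as a known result, cited to \cite[Theorem 9.1]{SW}, with the following paragraph simply attributing the good ordinary and multiplicative cases to Kato \cite{KK} and the good supersingular case to Perrin-Riou \cite{PR}. Your sketch — Kato's Euler system bounding the Iwasawa Selmer group, Perrin-Riou's reciprocity law and supersingular variant, then descent via the control theorem picking up Tamagawa factors and torsion, with $C$ absorbing the bad primes and primes where $\bar\rho_p(E)$ is not surjective — is a faithful outline of precisely the argument in those references, so it matches the approach the paper implicitly relies on.
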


Theorem \ref{zero} says that  cardinality of $p$ primary part of ${\cyr \Sha}(E/Q)$ divides the quantity  in the right hand side of the relation \eqref{div} for every prime $p$ of good ordinary reduction, multiplicative reduction and good supersingular reduction as long as the image $\bar{\rho}_p(E)$ is surjective and $p$ is odd. The case of ordinary primes are due Kato ( \cite{KK}) and the case of Supersingular primes are due to Perrin-Riou (see \cite{PR}, see also \cite[Section 8]{SW} for more details).
We also mention that when $E$ has good ordinary reduction at the odd prime $p$ and does not admit complex multiplication then due to the work of Kato (\cite{KK}) in combination with the work of Skinner and Urban (\cite{SU}) on the main conjecture of elliptic curves with good ordinary reduction at $p$, the following much stronger result holds (see  \cite[Theorem 6.2]{JC}). 

\begin{theorem}\label{zero1}(Kato, Skinner-Urban)
Let $E/\QQ$ be an elliptic curve  with good and ordinary reduction at $p$. Suppose that \\
(i) $E$ does not admit complex multiplication,\\
(ii) $ \bar{\rho}_p(E) $ is  surjective,\\
(iii) there exists a prime $q$, where $E$ has bad multiplicative reduction, such that $\bar{\rho}_p(E)$ is ramified at $q$\\
(iv) $L(E,1)\neq 0$. . \\
Then the $p$ part of the Birch-Swinnerton-Dyer conjecture holds for $E$ , that is , ${\cyr \Sha}(E/\QQ)(p)$ and  $E(\QQ)$ are finite  and 
\[ ord_p({\cyr \Sha}(E/\QQ)(p))= ord_p(\frac{L(E,1)}{\Omega_E}) + 2ord_p(\#E(\QQ))-ord_p(\prod_v c_{v,E}(\QQ)) \]
\end{theorem}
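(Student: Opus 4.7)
The plan is to deduce the $p$-part of the Birch--Swinnerton-Dyer formula from the cyclotomic Iwasawa Main Conjecture for $E$ at $p$ via a standard descent argument. Let $\QQ_\infty/\QQ$ denote the cyclotomic $\ZZ_p$-extension with Galois group $\Gamma$ and Iwasawa algebra $\Lambda = \ZZ_p[[\Gamma]]$, and let $X_\infty$ denote the Pontryagin dual of $\sel_{p^\infty}(E/\QQ_\infty)$. Under the good ordinary hypothesis, Mazur's theory attaches to $E$ a $p$-adic $L$-function $\mathcal{L}_p(E) \in \Lambda$ interpolating the critical values of $L(E,s)$, normalized by $\Omega_E$ and a modified Euler factor at $p$. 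The Main Conjecture asserts that $X_\infty$ is $\Lambda$-torsion and that its characteristic ideal equals $(\mathcal{L}_p(E))$.

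The first step is to invoke Kato's Euler system \cite{KK}, which under hypothesis (ii) yields the divisibility $\mathrm{char}_\Lambda(X_\infty) \mid (\mathcal{L}_p(E))$. The second step is to invoke the main theorem of Skinner--Urban \cite{SU} for the reverse divisibility: under (i), (ii), and crucially (iii) — the ramification of $\bar{\rho}_p$ at some prime of multiplicative reduction provides the residually non-split local condition needed to rule out Eisenstein contributions in their construction via congruences with Klingen Eisenstein series on $\mathrm{GU}(2,2)$. Combining these two divisibilities gives equality, i.e.\ the full Main Conjecture over $\QQ_\infty$.

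Next I would descend to $\QQ$. Because $L(E,1)\neq 0$, Kato's theorem already forces $\sel_{p^\infty}(E/\QQ)$ and hence $E(\QQ)$ and ${\cyr \Sha}(E/\QQ)(p)$ to be finite. Apply Mazur's control theorem to compare $\sel_{p^\infty}(E/\QQ)$ with $\sel_{p^\infty}(E/\QQ_\infty)^\Gamma$; under our hypotheses the kernel and cokernel of the control map are finite with orders that can be read off from local cohomology at $p$ and at primes of bad reduction. Evaluating $\mathcal{L}_p(E)$ at the trivial character via the interpolation formula produces the factor $\bigl(1-\alpha_p^{-1}\bigr)^2 L(E,1)/\Omega_E$, where $\alpha_p$ is the unit root of Frobenius. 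Equating the $p$-adic valuations of the two sides of the Main Conjecture specialized at the trivial character, and carefully cancelling the Euler factor at $p$ against the corresponding term arising from the control sequence, yields the identity
\[
\mathrm{ord}_p\bigl(\# {\cyr \Sha}(E/\QQ)(p)\bigr)
= \mathrm{ord}_p\!\left(\frac{L(E,1)}{\Omega_E}\right) + 2\,\mathrm{ord}_p\bigl(\#E(\QQ)\bigr) - \mathrm{ord}_p\!\left(\prod_v c_{v,E}(\QQ)\right).
\]

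The main obstacle is bookkeeping in the descent: the interpolation factor at $p$, the local Tamagawa contributions at primes $v \neq p$, and the $\#E(\QQ)_{\mathrm{tors}}$ factors arising from the control theorem must each be tracked and shown to combine into exactly the quantities appearing on the right-hand side. Hypothesis (ii) — surjectivity of $\bar{\rho}_p$ — is what guarantees that $H^0(\QQ_v,E[p^\infty])$ contributes only through the correct Tamagawa factors and that no spurious error terms survive in the Skinner--Urban divisibility; hypothesis (iii) eliminates the residual Eisenstein obstruction in their input. Once this bookkeeping is carried out (following \cite[\S6]{JC}), the stated formula follows.
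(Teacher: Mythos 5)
The paper does not actually prove Theorem \ref{zero1}; it states it as a known consequence of Kato \cite{KK} together with Skinner--Urban \cite{SU} and points the reader to \cite[Theorem 6.2]{JC}. There is therefore no in-paper argument to compare your proposal against. That said, your sketch accurately describes the standard proof route that underlies the citation: Kato's Euler system gives the divisibility $\mathrm{char}_\Lambda(X_\infty)\mid(\mathcal{L}_p(E))$ under surjectivity of $\bar\rho_p$; Skinner--Urban's Eisenstein congruence machinery on $\mathrm{GU}(2,2)$ supplies the reverse divisibility, with hypothesis (iii) serving exactly as the residual non-triviality at a multiplicative prime that their method requires; and the $p$-part of the BSD formula then follows by Mazur's control theorem together with the interpolation formula for $\mathcal{L}_p(E)$ at the trivial character, which produces the $(1-\alpha_p^{-1})^2\,L(E,1)/\Omega_E$ term. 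You correctly note that $L(E,1)\neq 0$ plus Kato already gives finiteness of $\sel_{p^\infty}(E/\QQ)$, hence of $E(\QQ)$ and ${\cyr\Sha}(E/\QQ)(p)$. The remaining bookkeeping (Tamagawa factors at $v\neq p$, the torsion factor, cancellation of the local term at $p$) is genuinely nontrivial but is exactly what is carried out in \cite[\S6]{JC}, which you cite. In short, your outline is correct and consistent with the sources the paper leans on; the paper itself simply treats the result as a black box.
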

Let $E$ be an elliptic curve with good  reduction at $p$ such that $L(E,1)\neq 0$. The $p$-part of  Birch-Swinnerton-Dyer conjecture also  holds for $E$ if it admits complex multiplication by an order in an imaginary quadratic field not containing the group of $p$-th root of unity (see \cite{R} or \cite[Theorem 6.1]{JC}).  If $L(E,s)$ has a simple zero at $s=1$ then a similar result holds for curves which admits complex multiplication without any condition on  the group of roots of unity (see \cite[Theorem 6.3]{JC}).  We do not have an analogue of Theorem \ref{zero1} in the case when $E$ has supersingular reduction at $p$ and it does not admit complex multiplication.  Nevertheless, using Theorem \ref{zero} we can prove the following corollary for a pair of elliptic elliptic curves $A$ and $B$ such that $A[p]\cong B[p]$ with good reduction  at $p$ (either ordinary or supersingular reduction).
\begin{corollary}\label{analy}
Suppose that $A$ and $B$ elliptic curves defined over  $\QQ$ with good reduction at  $p$  such that $A[p]\cong B[p]$ as $\gal(\bar{\QQ}/\QQ)$-modules.  Further we assume  that \\
(a) if $v$ is prime dividing $\n_A/\bar{\n}$ (resp. $\n_B/\bar{\n}$) then $A$ (resp. $B$) has non-split multiplicative reduction at $v$. \\
(b)  $A$ and $B$ are semi-stable over $\QQ$ .\\
(c)  The Mordell-Weil rank of $B$ is greater then the Mordell-Weil rank of $A$. \\
(d) $ \bar{\rho}_p(A)$ is surjective. \\
(e) $L(A,1)\neq 0$. \\
Then $p|\frac{L(A,1)}{\Omega_A}$. 
\end{corollary}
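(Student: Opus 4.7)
The plan is to apply Theorem \ref{nontrivial1} to produce an element of order $p$ in ${\cyr \Sha}(A/\QQ)$, and then to read off the desired $p$-divisibility from Theorem \ref{zero} (Kato--Perrin-Riou) by tracking each factor in the displayed bound on $\#{\cyr \Sha}(A/\QQ)$.

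First I will verify the hypotheses of Theorem \ref{nontrivial1} with $L=K=\QQ$, $n=p$, and $J=(A\oplus B)/A[p]$ as in Remark \ref{const}. Conditions (a), (b), (d) of Theorem \ref{nontrivial1} are precisely (a), (b), (c) of the corollary; irreducibility of $A[p]$ as a $\gal(\bar\QQ/\QQ)$-module (condition (c) of Theorem \ref{nontrivial1}) is immediate from the surjectivity of $\bar\rho_p(A)$ in hypothesis (d) of the corollary. The background Assumption \ref{assumption} is also in force: $e_p(\QQ)=1<p-1$ since $p$ is odd, and $\gcd(p,N(\QQ))=1$ since $A$ and $B$ have good reduction at $p$; the torsion vanishing conditions $B(\QQ)[p]=0$ and $(J/B)(\QQ)[p]=0$ follow from the irreducibility of $A[p]\cong B[p]$ as explained in Remark \ref{const}. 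Theorem \ref{nontrivial1} then produces an element of order $p$ in ${\cyr \Sha}(A/\QQ)$.

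Next I will invoke Theorem \ref{zero}, which applies because $L(A,1)\neq 0$, to obtain
\[
\#{\cyr \Sha}(A/\QQ)\;\bigm|\; C\cdot\frac{L(A,1)}{\Omega_A}\cdot\frac{\#A(\QQ)^2}{\prod_v c_{v,A}(\QQ)}.
\]
To conclude $p\mid L(A,1)/\Omega_A$ it suffices to show that $p$ divides neither $C$, nor $\#A(\QQ)$, nor any $c_{v,A}(\QQ)$. For $C$: by Theorem \ref{zero} it is a product of powers of $2$, powers of primes of additive reduction, and powers of primes where $\bar\rho_p$ is not surjective; semistability (b) eliminates additive primes, the odd prime $p$ disposes of the $2$-part, and hypothesis (d) takes care of the last contribution, so $p\nmid C$. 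For $\#A(\QQ)$: irreducibility of $A[p]$ forces $A(\QQ)[p]=0$. For the Tamagawa numbers: at $v\mid p$ one has $c_{v,A}=1$ by good reduction; at $v\mid\bar{\n}$ the conductor of the semistable curve $A$ at $v$ agrees with the prime-to-$p$ conductor of $A[p]$ (both equal to one), and Lemma \ref{compare} yields $p\nmid c_{v,A}$; at $v\mid\n_A/\bar{\n}$, hypothesis (a) provides non-split multiplicative reduction, forcing $c_{v,A}\in\{1,2\}$, again a $p$-adic unit.

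The only delicate point will be this Tamagawa analysis, but the hypotheses of the corollary have been arranged precisely so that the three cases above exhaust all primes, mirroring the argument already carried out in the proof of Theorem \ref{nontrivial1}. Once this bookkeeping is complete the divisibility $p\mid L(A,1)/\Omega_A$ drops out of the Kato--Perrin-Riou bound.
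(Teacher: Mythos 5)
Your proposal is correct and follows essentially the same route as the paper: apply Theorem \ref{nontrivial1} (via the construction of $J$ in Remark \ref{const}) to obtain an element of order $p$ in ${\cyr \Sha}(A/\QQ)$, then feed the divisibility from Theorem \ref{zero} into the observation that $C$, $\#A(\QQ)$, and $\prod_v c_{v,A}(\QQ)$ are all $p$-adic units under the given hypotheses. Your write-up is actually more explicit than the paper's (which compresses the Tamagawa bookkeeping into "by an argument similar to Theorem \ref{nontrivial1}"), but no new ideas are introduced.
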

\begin{proof}
First we note that the assumption that $ \bar{\rho}_p(A)$ is surjective implies that $A[p]$ is an irreducible $\gal(\bar{\QQ}/\QQ)$-module and in particular $A(\QQ)[p]=0$. From Theorem \ref{nontrivial1} we get that $p|\#{\cyr \Sha}(E/\QQ)$. Also by an argument similar to Theorem \ref{nontrivial1}, we have that $\prod c_{v,A}(\QQ)$ is a $p$-adic unit. Surjectivity of $ \bar{\rho}_p(A)$ implies that the quantity $C$ in Theorem \ref{zero} is a $p$-adic unit. Therefore we get that $p|\frac{L(A,1)}{\Omega_A}$.
\end{proof}

We need the following theorem for elliptic curves of rank one over $\QQ$ (see \cite[Theorem 9.1]{SW}). For the definition of $p$-adic regulator of an elliptic curve $E$ with good and ordinary reduction at $p$, we refer the reader to \cite[Page 15]{SW} (see also \cite[Theorem 10.3]{Zh}). 

\begin{theorem}(Kato, Perrin-Riou)\label{one}
Let $E/\QQ$ be an elliptic curve with good and ordinary reduction at the odd prime $p$. Assume that the $p$-adic regulator  of $E$ is non-zero. Suppose that the representation 
\[   \rho_p(E)  : \gal(\bar{\QQ}/\QQ) \lrta Aut_{\ZZ_p}(T_pE)) \]
 is surjective where $T_pE$ denotes the Tate module of $E$ over $\QQ$. If $L(E,s)$ has a simple zero at $s=1$, then 
 \[  ord_p(\#{\cyr \Sha}(E/\QQ)(p)) \leq  ord_p\Big (\frac{(\#E(\QQ)_{tor})^2}{\prod_v c_{v,E}(\QQ)} \times \frac{1}{R(E/\QQ)}\times \frac{L'(E,1)}{\Omega_E} \Big) \]
 where  $R(E/\QQ)$ denotes the real valued regulator of $E$ over $\QQ$. 
\end{theorem}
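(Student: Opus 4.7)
Since Theorem~\ref{one} is the deep joint product of Kato's Euler system and Perrin-Riou's $p$-adic $L$-function theory, any ``proof'' I would give is really a sketch of the strategy followed in the references cited by the author (Kato \cite{KK}, Perrin-Riou \cite{PR}, Schneider-Venjakob exposition in \cite{SW}). The plan is to deduce the divisibility in two logically independent steps: an \emph{algebraic} upper bound on the $p$-part of the Selmer group coming from an Euler system, and a \emph{transcendental} comparison of the leading term of the $p$-adic $L$-function with the classical derivative $L'(E,1)/\Omega_E$.

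First I would work over the cyclotomic $\ZZ_p$-extension $\QQ_\infty/\QQ$. Let $\Lambda = \ZZ_p[[\gal(\QQ_\infty/\QQ)]]$ and let $X_\infty$ be the Pontryagin dual of $\sel_{p^\infty}(E/\QQ_\infty)$. Kato's Euler system, built from Beilinson elements in $K_2$ of modular curves, produces a non-trivial element of the first Iwasawa cohomology of $T_pE$; passing through Perrin-Riou's logarithm map identifies its image with the $p$-adic $L$-function $\mathcal{L}_p(E)\in\Lambda$. The general Euler-system machinery (Kolyvagin, Rubin) then gives the divisibility
\[
 \mathrm{char}_\Lambda(X_\infty) \mid \mathcal{L}_p(E)
\]
in $\Lambda$, provided $\rho_p(E)$ is surjective so that all the hypotheses of the Euler-system bound are verified.

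The next step is to descend from $\QQ_\infty$ down to $\QQ$ using Mazur's control theorem for the ordinary case: the kernel and cokernel of the restriction map $\sel_{p^\infty}(E/\QQ) \to \sel_{p^\infty}(E/\QQ_\infty)^{\gal(\QQ_\infty/\QQ)}$ are controlled by local terms involving the Tamagawa numbers $c_{v,E}(\QQ)$ and the number of $p$-power torsion points. Consequently, evaluating $\mathrm{char}_\Lambda(X_\infty)$ at $T=0$ yields, up to the expected factors of $\#E(\QQ)_{\mathrm{tor}}^2/\prod_v c_{v,E}(\QQ)$, an upper bound for $\#{\cyr \Sha}(E/\QQ)(p)$ in terms of the leading coefficient of $\mathcal{L}_p(E)$ at $T=0$. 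Because $L(E,s)$ has a simple zero at $s=1$ and the $p$-adic regulator is assumed non-zero, the order of vanishing of $\mathcal{L}_p(E)$ at $T=0$ is exactly $1$ (this uses the $p$-adic analogue of BSD in the analytic rank one setting), so this leading coefficient is well defined.

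Finally, the transcendental input — Perrin-Riou's explicit reciprocity / interpolation formula — identifies the derivative $\mathcal{L}_p'(E)(0)$ with
\[
 \frac{L'(E,1)}{\Omega_E}\cdot \frac{R_p(E/\QQ)}{R(E/\QQ)}\cdot (\text{Euler factor at }p),
\]
where the Euler factor is a $p$-adic unit under the good ordinary hypothesis and surjectivity of $\rho_p(E)$. Dividing through by the non-zero $p$-adic regulator $R_p(E/\QQ)$ produces the displayed inequality of the theorem. The main obstacle is really this last compatibility: one must verify that the normalizations of periods, of the $p$-adic and archimedean regulators, and of the Euler factor at $p$ all line up so that no stray $p$-adic valuation is lost; this is precisely the content of Perrin-Riou's $p$-adic BSD formula and is where one invokes the surjectivity of $\rho_p(E)$ to keep the Euler-system error term a $p$-adic unit. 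Putting the algebraic upper bound and the analytic leading-term computation together yields the inequality in Theorem~\ref{one}.
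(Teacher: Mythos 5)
The paper does not prove Theorem~\ref{one}; it is quoted verbatim from \cite[Theorem 9.1]{SW} and invoked as a black box (note the phrase ``We need the following theorem \dots (see \cite[Theorem 9.1]{SW})'' immediately before it). So there is no internal proof to compare your sketch against --- the correct ``proof'' in the context of this paper is simply the citation, which you identified.

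As a summary of the argument in the cited literature, your two-step outline (an Iwasawa-theoretic upper bound $\mathrm{char}_\Lambda(X_\infty) \mid \mathcal{L}_p(E)$ coming from Kato's Euler system, then descent via the ordinary control theorem and Perrin-Riou's leading-term formula to compare $\mathcal{L}_p'(E)(0)$ with $L'(E,1)/\Omega_E \cdot R_p/R$) is the right shape and matches the exposition in Stein--Wuthrich. One imprecision worth flagging: surjectivity of $\rho_p(E)$ does \emph{not} imply that the Euler-type factor at $p$ (essentially $(1-1/\alpha)^2$ with $\alpha$ the unit root of Frobenius) is a $p$-adic unit --- that is the independent condition that $p$ not be anomalous, i.e.\ $a_p \not\equiv 1 \pmod p$. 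In the actual bookkeeping in \cite{SW} this factor is carried along explicitly and the inequality survives because it appears with the correct sign; your phrasing makes it sound as though surjectivity disposes of it, which it does not. The role of the surjectivity hypothesis is rather to make the Euler-system machinery (in particular the Kolyvagin/Rubin bound on the strict Selmer group) applicable with no fudge factor, and to ensure $E(\QQ)[p]=0$ so that the torsion term behaves. With that correction your sketch faithfully reflects the strategy the paper is leaning on.
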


Using Theorem \ref{one} and Theorem \ref{nontrivial1}, by an argument similar to the proof of Corollary  \ref{analy} we have the following corollary. 

\begin{corollary}\label{analy1}
Suppose that $A$ and $B$ are elliptic curves defined over  $\QQ$ with good and ordinary reduction at  $p$  such that $A[p]\cong B[p]$ as $\gal(\bar{\QQ}/\QQ)$-modules.  Further we assume  that \\
(a) if $v$ is prime  dividing $\n_A/\bar{\n}$ (resp. $\n_B/\bar{\n}$) then $A$ (resp. $B$) has non-split multiplicative reduction at $v$. \\
(b)  $A$ and $B$ are semi-stable over $\QQ$ .\\
(c)  The Mordell-Weil rank of $B$ is greater then the Mordell-Weil rank of $A$. \\
(d) $ \rho_p(A)$ is surjective. \\
(e) $L(A,s)$ has a simple zero at $s=1$. \\
Then $p | \frac{L'(A,1)}{R(E/\QQ)\times \Omega_A}$. 
\end{corollary}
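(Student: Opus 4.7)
The plan is to mimic the proof of Corollary \ref{analy} but with Theorem \ref{one} replacing Theorem \ref{zero}. First, I would verify that the hypotheses of Theorem \ref{nontrivial1} are met for $A$ and $B$ over $L=K=\QQ$ with $n=p$. Conditions (a), (b), (d) of Theorem \ref{nontrivial1} are given by hypotheses (a), (b), (c) of the corollary. The irreducibility hypothesis (c) of Theorem \ref{nontrivial1}, namely that $A[p]$ is irreducible as a $\gal(\bar\QQ/\QQ)$-module, follows from hypothesis (d) of the corollary, since surjectivity of $\rho_p(A)$ forces surjectivity of $\bar\rho_p(A)$. Applying Theorem \ref{nontrivial1} therefore gives an element of order $p$ in ${\cyr \Sha}(A/\QQ)$, so $\mathrm{ord}_p(\#{\cyr \Sha}(A/\QQ)(p)) \geq 1$.

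Next, I would show that the Tamagawa factor $\prod_v c_{v,A}(\QQ)$ is a $p$-adic unit. This is exactly the argument used inside the proof of Theorem \ref{nontrivial1}: for primes $v$ dividing $\bar{\n}_A$ the semistability together with Lemma \ref{compare} gives the $p$-adic unit statement, while for primes $v | \n_A/\bar{\n}_A$ the non-split multiplicative reduction hypothesis forces $c_{v,A}(\QQ)$ to be coprime to $p$. Moreover, since $\bar\rho_p(A)$ is surjective, $A(\QQ)[p]=0$, hence $\#E(\QQ)_{\mathrm{tor}}$ is itself a $p$-adic unit.

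I would then invoke Theorem \ref{one} for $A$, whose hypotheses (good ordinary reduction at $p$, non-vanishing $p$-adic regulator, surjectivity of $\rho_p(A)$, and simple zero of $L(A,s)$ at $s=1$) are either given in the statement of the corollary or follow as noted. The theorem yields the inequality
\[
\mathrm{ord}_p(\#{\cyr \Sha}(A/\QQ)(p)) \leq \mathrm{ord}_p\!\Bigl(\tfrac{(\#A(\QQ)_{\mathrm{tor}})^2}{\prod_v c_{v,A}(\QQ)}\Bigr) + \mathrm{ord}_p\!\Bigl(\tfrac{L'(A,1)}{R(A/\QQ)\cdot\Omega_A}\Bigr).
\]
Combining the three observations above — the left side is at least $1$, while the torsion and Tamagawa factors on the right contribute $0$ — forces $\mathrm{ord}_p\bigl(L'(A,1)/(R(A/\QQ)\,\Omega_A)\bigr)\geq 1$, which is the desired divisibility.

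The only real subtlety is that the corollary's hypotheses do not explicitly include non-vanishing of the $p$-adic regulator, which Theorem \ref{one} requires; I would either include this as a tacit hypothesis consistent with the corollary of Corollary \ref{analy} style, or appeal to the standard expectation that it holds under the stated surjectivity condition. Otherwise, everything is routine bookkeeping about $p$-adic valuations, and the main input is the production of a $p$-torsion element in ${\cyr \Sha}(A/\QQ)$ via Theorem \ref{nontrivial1}, which is precisely the mechanism that pushes divisibility from the Shafarevich--Tate group onto the algebraic $L$-value.
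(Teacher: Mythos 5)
Your proposal matches the paper's proof essentially line for line: the paper simply cites Theorem \ref{one} and Theorem \ref{nontrivial1} and says the argument is the same as for Corollary \ref{analy}, which is exactly what you carry out (produce a $p$-torsion element in ${\cyr \Sha}(A/\QQ)$ via Theorem \ref{nontrivial1}, observe the Tamagawa and torsion factors are $p$-adic units, and read off the divisibility of the algebraic $L'$-value from Theorem \ref{one}). You are also right to flag the non-vanishing of the $p$-adic regulator as an unstated hypothesis of Theorem \ref{one}; the paper's statement of Corollary \ref{analy1} omits it as well, so this is a genuine (minor) gap in the paper rather than in your argument.
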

In the context of assumption (d) of the above corollary we note that if $p\geq 5$ then $\rho_p(A)$ is surjective if and only if $\bar{\rho}_p(A)$ is surjective (see for example \cite[Proposition 7.2]{SW}). 

Given an elliptic curve $E$ defined over a number field $K$, let $d_p(E,K)$ be the smallest number such that there exists an extension of $K$ of degree  $d_p(E/K)$  and ${\cyr \Sha}(E/K)$ contains an element of order $p$. The integer $d_p(E/K)$ has extensively been studied by several mathematicians (see for example \cite{K}, \cite{C}, \cite{CS}).  In particular, it is known by the work of Clark and Shahed (\cite{CS}) that  $d_p(E/K)\leq p$.  It is generally expected that $d_p(E/K)\leq 2$.  It is an open problem to determine in general  if $d_p(E/K)$ is smaller then $p$. Indeed, in the above example we   see that $d_5(E_1/\QQ)=2$.  

\begin{remark}\label{remark}
Let $A$ be an abelian variety defined over $\QQ$. Suppose that there exist  abelian varieties $B$ and $J$ defined over $\QQ$ such that the following hold  \\
(a) There exists a quadratic extension $M/\QQ$ such that the rank of $A^\chi$ is less than the rank of $B$. \\
(b) $A[p]$ is irreducible as $\gal(\bar{M}/M)$ module. \\
(c) $J$ contains $A$ and $B$ as sub varieties such that Assumption \ref{assumption}  and also Assumption $(i)$ to $(iii)$ of Theorem \ref{quadratic} hold for $A$, $J$ and $B$ over $M$.\\
Then from Theorem \ref{quadratic} we get that ${\cyr \Sha}(A/M)$ contains an element of order $p$. 
\end{remark}

We mention that if $A[p]$ is an irreducible $\gal(\bar{\QQ}/\QQ)$-module and $\QQ(A[p])\cap M = \QQ$ then $A[p]$ is also an irreducible $\gal(\bar{M}/M)$-module. In particular, the condition $(b)$  holds for all but finitely many quadratic extensions $M/\QQ$.  Also, note that the above pair of elliptic curves with conductor $52$ and $264$ satisfy the assumptions $(a)$, $(b)$ and $(c)$ for $p=5$ and extension $M=\QQ(\sqrt{59})$. Another such example is the following pair of elliptic curves for $p=3$.  

\begin{example}
{ \normalfont
Let $A$ and $B$ be elliptic curves defined over $\QQ$ by the equations

\[   A :  y^2 + xy + y = x^3 - x^2 - 57x + 222   \]
\[   B :  y^2 + xy + y = x^3 - x^2 - 91x - 310   .\]
The conductor of $A$ is  $493$ and the conductor of $B$ is $17$. We have that $A[3]\cong B[3]$ as $\gal(\bar{\QQ}/\QQ)$-modules. Consider $J=(A\oplus B)/A[3]$ where $A[3]$ is considered as the subgroup scheme of $A\oplus B$ via the diagonal embedding. Then $A$, $B$ and $J$ satisfy condition $(a)$, $(b)$ and $(c)$ for $M=\QQ(\sqrt{195})$. If we denote the nontrivial  quadratic character of $\gal(M/\QQ)$ by $\chi$, then the rank of $A^\chi$ is $0$ and the rank of $B^\chi$ is $2$. In particular, the Shafarevich-Tate group of $A$ has a subgroup of order $9$. We mention that the elliptic curves $A^\chi$ and $B^\chi$ have bad reduction at the prime $3$. By a similar method, we can find a subgroup of order $9$ in the Shafarevich-Tate group of the 
curve $B$ over $M=\QQ(\sqrt{253})$. 
}
\end{example}
\qed

Let $\chi$ be a quadratic character (non-trivial) of the absolute Galois group of $\QQ$ with conductor $N_\chi$ and $M$ be the corresponding quadratic extension of $\QQ$. 
\begin{lemma}\label{twist}
Let $A$ be an elliptic curve defined over $\QQ$ with good reduction at the odd prime $p$. Suppose that  \\
(i) $\bar{\n}_A$ is coprime to $\n_A/\bar{\n}_A$ and $\n_A/\bar{\n}_A$ is square free, \\
(ii) a prime divisor $q$ of $\n_A/\bar{\n}_A$ is non split prime in $M$ if and only if $A$ has split multiplicative reduction at $q$, \\
(iii) $N_\chi$ is coprime to $p\n_A$.\\  
Then,  we have that the Tamagawa number of $A^\chi$ is a $p$-adic unit at every prime of $\QQ$ if one of the following holds \\
(a) $p > 3$,\\
(b) $p=3$ and $N_\chi$ is an odd number (i.e. $N_\chi\equiv 1$  mod $4$).  
\end{lemma}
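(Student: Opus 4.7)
The plan is to verify, prime by prime, that the local Tamagawa factor $c_{v,A^\chi}$ is a $p$-adic unit. The hypotheses partition the relevant primes into five disjoint classes: primes with $v \nmid p\n_A N_\chi$; the prime $v=p$; primes dividing $\bar{\n}_A$; primes dividing $\n_A/\bar{\n}_A$; and primes dividing $N_\chi$. In the first two cases, hypothesis (iii) forces $\chi$ to be unramified at $v$, so $A^\chi$ inherits good reduction from $A$ and $c_v=1$.

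For $v \mid \bar{\n}_A$, hypothesis (iii) again forces $\chi$ unramified at $v$ and $v\neq p$, so twisting does not alter conductors at $v$: the conductor of $A^\chi$ at $v$ equals that of $A$, and the conductor of $A^\chi[p]\cong A[p]\otimes\chi$ at $v$ equals that of $A[p]$. Hypothesis (i) forces these last two to coincide at $v$ (since $v\nmid\n_A/\bar{\n}_A$), so the hypothesis of Lemma \ref{compare} applies to $A^\chi$, yielding $c_{v,A^\chi}\in\ZZ_p^\times$. For $v \mid \n_A/\bar{\n}_A$, the square-freeness of $\n_A/\bar{\n}_A$ together with hypothesis (i) forces $v\nmid\bar{\n}_A$ and the exponent of $v$ in $\n_A$ to equal $1$, so $A$ has multiplicative reduction at $v$; hypothesis (iii) makes $\chi$ unramified there, and the classical behavior of quadratic twists at multiplicative primes, namely that split and non-split interchange precisely when $v$ is inert in $M$, combined with hypothesis (ii), shows that $A^\chi$ is non-split multiplicative at $v$. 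Hence $c_{v,A^\chi}\in\{1,2\}$, which is a unit at the odd prime $p$.

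The genuine content lies in the last case, $v \mid N_\chi$. Here hypothesis (iii) guarantees that $A$ has good reduction at $v\neq p$, but $\chi$ is ramified at $v$, so $A^\chi$ has potentially good but not good reduction at $v$; that is, additive reduction. By the Kodaira--N\'eron classification, the geometric component group at any additive fibre has order at most $4$, so $c_{v,A^\chi}\le 4$. This immediately settles case (a), since every integer at most $4$ is a $p$-adic unit when $p>3$. For case (b), $p=3$ with $N_\chi$ odd: since $2\nmid N_\chi$ by assumption and $3\nmid N_\chi$ by (iii), every $v\mid N_\chi$ satisfies $v\ge 5$, so $\chi$ is tamely ramified at $v$, and a standard application of Tate's algorithm shows that the quadratic twist of a good-reduction curve by a tamely ramified quadratic character at such $v$ has Kodaira type $I_0^*$. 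The geometric component group $(\ZZ/2\ZZ)^2$ then gives $c_{v,A^\chi}\in\{1,2,4\}$, a $3$-adic unit.

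The principal obstacle, and the reason for the dichotomy between (a) and (b), is wild ramification at $v=2$: if $2\mid N_\chi$ and $p=3$, the twist $A^\chi$ can acquire Kodaira type $IV$ or $IV^*$, whose component groups have order $3$, spoiling the desired $p$-adic unit property. The hypothesis $N_\chi\equiv 1\pmod 4$ in case (b) is precisely designed to eliminate this possibility, while for $p\ge 5$ the universal bound $c_v\le 4$ at additive places absorbs even this wild behavior.
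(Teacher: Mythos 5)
Your proposal is correct and follows essentially the same prime-by-prime decomposition as the paper: primes dividing $\bar{\n}_A$ handled via Lemma~\ref{compare}, primes dividing $\n_A/\bar{\n}_A$ handled by showing $A^\chi$ is non-split multiplicative there (you phrase this with Frobenius/inertness, the paper with Fourier coefficients $a_q(f_A)$ — equivalent), and primes dividing $N_\chi$ handled by the additive-reduction bound $c_v\le 4$. The only substantive departure is cosmetic: for the case $p=3$ with $N_\chi$ odd you supply a self-contained argument (tame ramification forces Kodaira type $I_0^*$, whose component group is $(\ZZ/2\ZZ)^2$, so $c_v\in\{1,2,4\}$), whereas the paper simply cites \cite[Section~3]{CW} for the fact that $c_{q,A^\chi}$ is a power of $2$; your version is arguably more transparent and also explains why wild ramification at $2$ is the genuine obstruction.
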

\begin{proof}
Let $N_\chi$ denote the conductor of $\chi$ and $M$ be the quadratic field extension of $\QQ$ associated to $\chi$. From the assumption $(i)$ of the lemma, the conductor of $A$ is same as the conductor of $A[p]$ at primes dividing $\n_A$.   Since $N_\chi$ is coprime  to $N_A$,  at a prime $q|\bar{\n}_A$ the conductor of $A$ is same as the conductor $\A^\chi$ and the conductor of $A[p]$ at $q$ is same as the conductor of $A^\chi[p]$. Thus from Lemma \ref{compare}  we get that $c_{q,A}=c_{q,A}(\QQ)$ is a $p$-adic unit. Now we consider the case when $q|N_\chi$. In this case  $A^\chi$ has additive reduction at $q$. Therefore $c_{q,A^\chi}\leq 4$. In particular, if $p>3$  then we have that $c_{q,A}$ is a $p$-adic unit. When $p=3$, using the assumption that  $N_\chi$ is odd it can be shown that $c_{q,A^\chi}$ is a power of $2$ (see for example \cite[Section 3]{CW}). 

Finally we consider the case when $q|\n_A/\bar{\n}_A$. Since $\n_A/\bar{\n}_A$ is square free and $N_\chi$ is coprime to $\n_A/\bar{\n}_A$, $A^\chi$ has multiplicative reduction at $q$. The Tamagawa number of $A^\chi$ at  $q$ is a $p$-adic unit if  $A^\chi$ has non-split mulplicative reduction at $q$.  Let $f_A$  denotes the modular form associated to $A$ via modularity and $a_q(f_A)$ denotes the $q$-th Fourier coefficient of $f_A$ . Then $A$  has non-split multiplicative  reduction at $q$ if and only if $a_q(f_A)$  is equal to $-1$ and $A$ has split multiplicative reduction at $q$ if the $a_q(f_A)$  is equal to $1$ at $q$. A similar assertion holds for $A^\chi$. From the assumption $(ii)$ of the lemma, we get that $a(f_A)=a(f_{A^\chi})$ if $A$ has non-split multiplicative reduction at $q$ and $a(f_A)= -a(f_{A^\chi})$ if $A$ has split multiplicative reduction at $q$. In particular, we get that $a(f_{A^\chi})=-1$ for every $q| \n_A/\bar{\n}_A$. This implies that $A^\chi$ has non-split multiplicative reduction at prime divisors of $\n_A/\bar{\n}_A$ and therefore $c_{q,A^\chi}$ is a $p$-adic unit at every prime $q| \n_A/\bar{\n}_A$. 
\end{proof}

Let $A$ and $B$ are elliptic curves defined over $\QQ$ such that $A^\chi$ and $B^\chi$ satisfy conditions of Lemma \ref{twist} for a quadratic character associated to a quadratic extension $M$. Then from Lemma \ref{twist} and Remark \ref{remark} we get that the Shafarevich Tate group of $A$ has an element of order $p$ over $M$  if $A$ and $B$ satisfy conditions $(a)$ and $(b)$ of remark \ref{remark}. 

We also mention  that, in the case when $p=3$, $N_\chi\equiv 1$ mod $4$ is a sufficient but not necessary condition. 

\begin{example}
{ \normalfont
We have the following pair of elliptic curves with conductor $203$

\[ E_1 :  y^2 + y = x^3 - x^2 + 20x - 8   \]

\[  E_2 :  y^2 + xy = x^3 + x^2 - 9x + 8  \]
satisfying $E_1[3]\cong E_2[3]$. The Mordell-Weil rank of $E_1$ and $E_2$ are zero. The Tamagawa numbers for $E_1^{(3)}$ and $E_2^{(3)}$ are $3$ adic units at every prime of bad reduction. Note that the conductor of the quadratic character of the quadratic extension $\QQ(\sqrt{3})/\QQ$ is $12$.  The rank of $E^{(3)}_1$ is zero and the rank of $E_2^{(3)}$ is $2$ over $\QQ$. In particular this implies ${\cyr \Sha}(E_1/\QQ(\sqrt{3}))$ has a visible subgroup of order  $9$.  Similarly we can show that ${\cyr \Sha}(E_1/\QQ(\sqrt{23})$ has a visible subgroup of order $9$.
}
\end{example}
\qed

Given an elliptic curve $E$ defined over $\QQ$, by a result of Rubin and Silverberg (see \cite{RS}) there exists infinitely many elliptic curves defined over $\QQ$ with residual Galois representation  equivalent to the residual Galois representation  of $E$ at  $p=3$ and $p=5$. One can also find similar examples for $p=7$ and $11$ (see \cite{F}).  For larger primes such examples of pair of elliptic curves with equivalent residual Galois representations are difficult to find. But by the level raising result of modular forms, one can find  examples of modular abelian varieties with residual representation equivalent to the residual representation of  given elliptic curve $E$ at larger primes $p$ (see \cite{D}).

\section{visibility over $p$-extensions}\label{prime1}
As in the previous section, we fix a Galois extension $L$ of $\QQ$. Let $M/L$ and $K/L$ be Galois extensions of $L$ such that $K\subset M$. Fix an odd prime $p$. 
In the next theorem for a number field extension $M/K$, 
for simplicity we denote the map $\tau_{K}^{M} : \Vis_J(K,A) \lrta \Vis_J(M,A)$ by $\tau$. 
\begin{theorem}\label{exten}
Suppose that Assumption \ref{assumption} holds for $n=p$. Further assume that \\
(i) the ramification index  $e_v$ of  $M$ at primes dividing $N/N_A$ is divisible by $p$,\\
(ii) $p\nmid \prod_{v|N_A(K)} c_{v,A}(K)c_{v,B}(K).$\\ 
 Then the composition 
\[   B(K)/pB(K)  \stackrel{\phi}{\lrta} Vis_J(K,A) \stackrel{\tau}{\lrta} Vis_J(M,A) \]
factors through a natural map $B(K)/pB(K) \lrta Vis_J({\cyr \Sha}(M/A))$. 
We have that $\tau\phi$ is an injective homomorphism if the rank of $A(M)$ is zero. More generally, if $M$ is a cyclic extension of $K$ of degree $p$  then the $\ZZ/p\ZZ$-rank of the kernel of $\tau\phi$ is bounded by $(rank(A(K))+\frac{rank(A(M))-rank(A(K))}{p-1})$. \\
\end{theorem}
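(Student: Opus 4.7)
Assumption \ref{assumption}(d) gives $B(K)[p]=(J/B)(K)[p]=0$, so Lemma \ref{vis} supplies the map $\phi_K:B(K)/pB(K)\to \Vis_J(K,A)$ with $|\ker\phi_K|\le p^{\rank A(K)}$. The strategy is first to verify that $\tau^M_K\phi_K$ lands in $\Vis_J(\Sha(A/M))$ by checking local triviality of $\phi_K(x)$ in $H^1(M_w,A)$ at every prime $w$ of $M$, and second to bound $\ker(\tau^M_K\phi_K)$. Writing $\phi_K(x)=\delta(y)$ where $y=\tilde\pi(z)\in C(K)$ for a $p$-th root $z\in B(\bar K)$ of $x$, $C=J/A$, and $\tilde\pi:B\to C$ the natural isogeny (whose kernel $A\cap B$ contains $B[p]$), local vanishing in $H^1(M_w,A)$ is equivalent to $y$ lifting to $J(M_w)$; since $py=\tilde\pi(x)$ already lifts to $x\in B(K_v)\subset J(M_w)$, the obstruction is $p$-torsion.

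For a prime $v$ of $K$ below $w$, three types are handled by prior methods: if $v\nmid pN(K)$, the N\'eron-model smoothness argument of \cite[Theorem 3.1]{AS} shows $\phi_K(x)=0$ in $H^1(K_v,A)$; if $v\mid p$, Lemma \ref{smooth1} applies (using $A,B,J$ defined over $L$ with good reduction above $p$ and $e_p(L)<p-1$) to produce $0\to A(K_v^{ur})\to J(K_v^{ur})\to C(K_v^{ur})\to 0$, from which the argument of Theorem \ref{improv} gives vanishing; if $v\mid N_A(K)$, hypothesis (ii), $p\nmid c_{v,A}(K)c_{v,B}(K)$, permits another application of \cite[Theorem 3.1]{AS}. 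The main obstacle is the remaining case $v\mid N(K)/N_A(K)$: here $A$ has good reduction but $B,C$ do not, and $c_{v,B}(K)$ may be divisible by $p$, so $\phi_K(x)$ is typically nonzero in $H^1(K_v,A)$ and must be killed by restriction to $H^1(M_w,A)$ via hypothesis (i), $p\mid e(M_w/K_v)$. Since $A$ has good reduction at $v\nmid p$, the sequence $0\to \mathcal{A}\to \mathcal{J}^0\to \mathcal{C}^0\to 0$ of identity components of N\'eron models is fppf-exact and smooth, so Lemma \ref{smooth} combined with Lang's theorem (for good-reduction $A$ at $v\nmid p$, $H^1_{\mathrm{ur}}(M_w,A)=0$) gives $J^0(M_w)\twoheadrightarrow C^0(M_w)$. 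The obstruction thus lies in the image of $\bar y$ in $\Phi_C(k_w)/\Img\Phi_J(k_w)$, which is $p$-torsion since $py$ lifts. Under the ramified base change with $p\mid e(M_w/K_v)$, the component group of the semistable quotient $C$ enlarges (in the Tate-curve picture, a cyclic component group of order $n$ becomes of order $n\cdot e(M_w/K_v)$), and the class of $\bar y$ in the cokernel over $M_w$ becomes zero because the natural map from the cokernel over $K_v$ factors through multiplication by $e(M_w/K_v)$, killing $p$-torsion classes when $p\mid e(M_w/K_v)$. Making this component-group calculation rigorous in the generality of semistable abelian varieties is the technical heart of the proof.

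For the kernel bound, the snake lemma applied to $B(K)/pB(K)\xrightarrow{\phi_K}\Vis_J(K,A)\xrightarrow{\tau^M_K}\Vis_J(M,A)$ gives $0\to \ker\phi_K\to \ker(\tau^M_K\phi_K)\to \ker\tau^M_K\cap \Img\phi_K$, and inflation--restriction identifies $\ker\tau^M_K\subseteq H^1(\gal(M/K),A(M))$. In the natural setup of the paper (cf.\ Remark \ref{const}), $A(K)[p]=0$, and for cyclic $M/K$ of order $p$ this propagates via $(\sigma-1)$-nilpotency on $\FF_p[\gal(M/K)]$-modules to $A(M)[p]=0$, so $A(M)$ has no $p$-torsion. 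Decomposing $A(M)\otimes\QQ=V^+\oplus W$ as $\gal(M/K)$-modules with $V^+=A(K)\otimes\QQ$ trivial of dimension $\rank A(K)$ and $W\cong\QQ(\zeta_p)^k$ for $k=(\rank A(M)-\rank A(K))/(p-1)$, the standard computations $H^1(\ZZ/p\ZZ,\ZZ)=0$ and $H^1(\ZZ/p\ZZ,\ZZ[\zeta_p])=\FF_p$ yield $\dim_{\FF_p}H^1(\gal(M/K),A(M))=k$. Combined with $\dim_{\FF_p}\ker\phi_K\le \rank A(K)$, this gives $\dim_{\FF_p}\ker(\tau^M_K\phi_K)\le \rank A(K)+k$. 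In particular, $\rank A(M)=0$ forces $k=0$ and $\ker\phi_K=0$, hence injectivity.
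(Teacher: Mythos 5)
The overall architecture of your argument coincides with the paper's, and your treatment of the cases $v\nmid pN(K)$, $v\mid p$ (via Lemma~\ref{smooth1}), $v\mid N_A(K)$ (via hypothesis (ii) and \cite[Theorem 3.1]{AS}), and the Archimedean places is essentially what the paper does. The snake-lemma plus inflation--restriction kernel bound is likewise the paper's route; the paper decomposes $A(M)\otimes\ZZ_p\cong\ZZ_p^a\oplus\ZZ_p[G]^b\oplus I(G)^c$ using the classification of $\ZZ_p[\ZZ/p]$-lattices, from which $\dim_{\FF_p}H^1(G,A(M)\otimes\ZZ_p)=c$, whereas you read off $\dim_{\FF_p}H^1(G,A(M))=k$ from the $\QQ[G]$-decomposition; the correct statement is only $c\le k$ (equality needs $b=0$), but since an upper bound is all that is required, the resulting estimate is the same.

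The genuine gap is the case $v\mid N(K)/N_A(K)$, which you correctly identify as the crux but explicitly leave unfinished. Your sketch needs exactness and smoothness of $0\to\mathcal A\to\mathcal J^0\to\mathcal C^0\to 0$ for N\'eron models even though $J$ and $C$ have bad reduction at $v$ --- a nontrivial claim --- and then an identification of how the obstruction class moves in component groups under ramified base change, which you state only in the Tate-curve picture and acknowledge is not rigorous for general semistable abelian varieties. The paper sidesteps all of this by appealing to the classical theorem of Lang and Tate \cite[Section 4, Corollary 1]{LT}: if $A$ has good reduction at $v$ with $v\nmid p$ and $\alpha\in H^1(K_v,A)$ has order dividing $p$, then $\alpha$ dies under restriction to any extension $M_w/K_v$ whose ramification index is divisible by $p$. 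Since $A$ has good reduction at every $v\mid N(K)/N_A(K)$ and $\phi_K(x)$ has order dividing $p$, hypothesis (i) yields $\tau_w\kappa_v\phi_K(x)=0$ immediately, with no component-group analysis of $J$ or $C$ required. Replacing your component-group sketch with this reference closes the gap and is both simpler and fully general.
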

\begin{proof}
For a prime $v$ of $K$, let $\kappa_w$ denote the restriction map
\[  H^1(K,A)  \lrta H^1(K_v,A)  \]
and for a prime $w|v$, let $\tau_w$ be the restriction map $H^1(K_v,A) \lrta H^1(M_w,A)$.  Let $x\in B(K)$. To show that $\tau\phi(x)\in Vis_J({\cyr \Sha}(M/A))$, it is enough to show that image of $\tau\phi$ lies in the kernel of $\tau_w\kappa_v$ for every prime $v$ of $K$ and primes $w|v$ of $M$. First, we suppose that $v$ is an Archimedean prime. Then the order of image of $x$ in $H^1(K_v,A)$  divides  $2$. On the other hand from Lemma \ref{vis} we get that the order of the image of $x$ divides the odd prime $p$. This implies that the image of $x$ in $H^1(K_v,A)$ is zero. Therefore $\tau_w\kappa_v\phi(x)=0$. 

Next we suppose that $v|N_A(K)$. From the assumption of the theorem, we have that $c_{v,A}(K)c_{v,B}(K)$ is a $p$ adic unit. Therefore  it follows from the  proof of \cite[Theorem 3.1]{AS} that the image of $x$ in $H^1(K_v,A)$ is zero. This implies that the image of $x$ in $H^1(M_w,A)$ is zero. Now we consider the case when $v|N(K)/N_A(K)$. In particular, $A$ has good reduction at $v$. Thus it follows from \cite[Section 4, Corollary 1]{LT} that  the image of $x$ in $H^1(M_w,A)$ is trivial. If $w\nmid pN$ then the Tamagawa number of both $A$ and $B$ are $p$-adic unit over $K$ and it follows form \cite[Theorem 3.1]{AS} that the image of $x$ in $H^1(K_v,A)$ is trivial. 

Finally, we consider the case when $v|p$. Let $C$ be the abelian variety defined as $J/A$ over $L$. Since ramification $e_p$  of $L$ at primes dividing $p$ satisfies $e_p<p-1$, $A$ and $B$ are defined over $L$, from Lemma \ref{smooth1} we get that the sequence 
\[  0 \lrta  A(K_w^{ur}) \lrta J(K_w^{ur}) \lrta C(K_w^{ur}) \lrta 0  \]
is exact. Now it follows from an argument similar to the proof of \cite[Theorem 3.1]{AS} that the image $x$ in $H^1(K_v,A)$ is zero. Therefore the image of $x$ in $H^1(M_w,A)$ is also zero. This proves the first part of the theorem. 

Suppose that $M/K$ is a cyclic extension of degree $p$. Next, we use the $G:=\gal(M/K)$-module structure of $A(M)$ to determine a bound on the size of the kernel of $\tau\phi$. 
By the structure of module over the group ring $\ZZ_p[G]$-we get that 
 \[  A(M)\otimes \ZZ_p \cong \ZZ_p^a \oplus \ZZ_p[G]^b \oplus I(G)^c \] 
 for some positive integers $a$, $b$ and $c$ where $I(G)$ denotes the augmentation ideal  of the group ring $\ZZ_p[G]$.  Now using the facts that $H^1(G,\ZZ_p)=0$, $H^1(G,\ZZ_p[G])=0$ and $H^1(G,I(G))=\ZZ/p\ZZ$, we get that $H^1(G,A(M)\otimes \ZZ_p) \cong (\ZZ/p\ZZ)^c$. Since $\ZZ_p$ is a flat $\ZZ$-module, we have that $H^i(G,X)\otimes \ZZ_p\cong H^i(G,X\otimes \ZZ_p)$ for every $i$ and $\ZZ[G]$-module $X$.   For $i\geq 1$, $H^i(G,A(M)$ is a finite $p$-torsion abelian group and therefore $H^i(G,A(M))=H^i(G,A(M))\otimes \ZZ_p=H^i(G,A(M)\otimes \ZZ_p)$. Also note that the rank of $\ZZ_p$-module $A(K)\otimes \ZZ_p$ is equal to the $\ZZ_p$ rank of $H^0(G,A(M)\otimes \ZZ_p)$ which is same as $a+b$. 
 
From snake lemma we get the exact sequence,
\[  0 \lrta ker(\phi) \lrta ker(\tau\phi) \lrta ker(\tau)  . \]
From \cite[Lemma 3.7]{AS}  the $\ZZ/p\ZZ$-rank of  $ker(\phi)$ is bounded by the $\ZZ$-rank $a+b$ of $A(K)$ which is same as the $\ZZ_p$ rank of $A(K)\otimes \ZZ_p$.    Since $ker(\tau)$ is a subgroup of $H^1(G,A(M))$, the $\ZZ/p\ZZ$-rank of $\ker(\tau)$ is bounded by $c$. This implies that the $\ZZ/p\ZZ$-rank of $ker(\tau\phi)$ is bounded by $a+b+c\leq rank(A(K))+\frac{rank(A(M))-rank(A(K))}{p-1}$. 

Note that if $\gal(M/K)$ is a group of order prime to $p$ then $H^1(G,A(M)[p]=0$. This implies that the $ker(\tau)[p]=0$. Therefore the kernel of $\tau\phi$ is bounded by the kernel of $\phi$. Therefore in this case the $\ZZ/p\ZZ$-rank of the kernel of $\tau\phi$ is bounded  the rank of $A(K)$.  
 \end{proof}
Next  using an example we illustrate how the above theorem can be used to produce visible elements in the  Shafarevich-Tate groups of abelian varieties. 
\begin{example}
{ \normalfont
Let $E_1$ and $E_2$ be elliptic curves of conductor $176$ and $1232$ respectively defined by the equations
\[  E_1  :   y^2 = x^3 + x^2 - 5x - 13 \]  
and 
\[ E_2   :  y^2 = x^3 + x^2 + 56x - 588   \]
We have that $E_1[3]\cong E_2[3]$ as $\gal(\bar{\QQ}/\QQ)$-modules. Tamagawa numbers of elliptic curves $E_1$ (resp. $E_2$) at primes dividing $176$ are $3$-adic units over $K=\QQ(\mu_3)$. The curve $E_1$ has good reduction at the primes of $K$ 
dividing $7$ and $E_2$ has split multiplicative reduction at primes dividing $7$. 
The Tamagawa number of $E_2$ at the primes dividing $7$ of $K$ are not $3$-adic units.  

As in previous section, put $J=(E_1\oplus E_2)/E[3]$ where we view  $E[3]$ as a subvariety of $E_1\oplus E_2$ via the diagonal embedding and let $E_i$ maps to $J$ via the imbedding 
\[  E_i \hookrightarrow E_1\oplus E_2  \twoheadrightarrow (E_1\oplus E_2)/E_1[3]  \]  
 for $i=1,2$.  Note that under this map $E_1[3]$ maps to the image of $E_2[3]$ and $E_1\cap E_2=E_1[3]=E_2[3]$ in $J$.  Further it can be checked that $E_1[3]\cong E_2[3]$ are irreducible $\gal(\bar{\QQ}/K)$-modules.   In particular, this implies that $E_1(K)[p]=0$. Since $J/E_1$ is isogenous to $E_2$ and $E_2[3]$ is an irreducible $\gal(\bar{\QQ}/K)$-module, $(J/E_1)(K)[p]=0$.  We consider the field extension $M=K(7^{1/3})$ over $K$. Since $M/K$ is ramified at $7$,  we get a map from
\[  \tau^M_K\phi_K : E_2(K) \lrta  Vis_J({\cyr \Sha}(E_1/M))    \]
as in Theorem \ref{exten}. Now, it can be checked using Sage that the rank of $E_1$ over $M$ is zero and the rank of $E_2$ over $K$ is two. Therefore $\tau^M_K\phi_K$ is an injective map and its image is a rank two visible subgroup in  $Vis_J({\cyr \Sha}(E_1/K))\subset {\cyr \Sha}(E_1/K)$.
}
\end{example}
\qed

Let $A$ and $B$ be abelian varieties defined over $\QQ$ such that $A[p]\cong B[p]$ as group schemes. Suppose that $A[p]$ is irreducible as $\gal(\bar{\QQ}/\QQ(\mu_p))$-module.  Put $J:=(A\oplus B)/A[p]$ where $A[p]$ is viewed as the subgroup scheme of $A\oplus B$ via the diagonal embedding $A[p]\hookrightarrow A\oplus B$.  Put $K=\QQ(\mu_p)$ and $M=K(m^{1/p})$ where $m=N(K)/N_A(K)=lcm(N_A(K),N_B(K))/N_A(K)$. Since $A[p]\cong B[p]$ is irreducible as $\gal(\bar{\QQ}/\QQ)$-module, from Remark \ref{const} we have that  $A(K)[p]=0=(J/A)(K)[p]$. Note that  $M/K$ is a degree $p$-extension and this implies that  $A(M)[p]=0$.  We suppose that $\prod_{v|N_A}c_{v,A}(K)c_{v,B}(K)$ is a $p$-adic unit.  Also, note that the ramification index of $M/K$ at primes dividing $m$ is equal to $p$. Therefore from Theorem \ref{exten} we have a natural homomorphism 
\[  B(K)/pB(K) \lrta  Vis_J({\cyr \Sha}(M/A)) \]
We consider a family of abelian varieties $\{B_t\}_{t\in I}$ over $\QQ$ satisfying the above conditions and  $B_t[p]\cong A[p]$ for all $t$. Let $N_t$ denote the product of primes of bad reduction of $A$ or $B$ over  and put $m_t=N_t(K)/N_A(K)$, $K=\QQ(\mu_p)$ and $M_t=K(m_t^{1/p})$.  Then the rank of $B_t$ over $K=\QQ(\mu_p)$ is bounded by  $(rank(A(M_t))-rank(A(K)))/(p-1)+c$ where $c$ denotes the $\ZZ/p$-rank of ${\cyr \Sha}(A/M_t)[p]$.  Thus to study the variation of ranks of abelian varieties in the given family over $K$, we need to understand the variation of the rank and cardinality of the $p$-torsion subgroup  of the Shafarevich-Tate group and Mordell-Weil rank of the fixed abelian variety over Kummer extensions of the form of $K(m^{1/p})$ as $m$ varies over positive integers. 

As an example,  consider the elliptic curve A=11A3 of conductor $11$ from Cremona table defined by the  equation
\[  A :  y^2  +  y =  x^3 - x \]
and $p=3$. The Tamagawa number of $A$ over $K=\QQ(\mu_3)$ is $3$-adic unit at the primes of $K$ dividing $N_A(K)=11$. Also the residual representation $A[3]$ of $A$ at $3$ is irreducible as $\gal(\bar{K}/K)$-module. The rank of $A$ over $K$ is trivial. From \cite{RS} we get an infinite family of elliptic curves $\{B^t\}_{t\in \ZZ}$ defined over $\QQ$ such that $A[p]\cong B^t[p]$ as $\gal(\bar{\QQ}/\QQ)$-module. If the Tamagawa number of $B^t$ is a $3$-adic unit at the primes dividing $11$ of $K$,  we get the natural map 
\[  B(K)/pB(K) \lrta  Vis_J({\cyr \Sha}(M/A)) \]
as in Theorem \ref{exten}. 
Since the rank of $A$ is trivial over $K$, the kernel of $\tau\phi$ is a subgroup of $H^1(K(n^{1/p})/K, A(K(n^{1/p})))$ where $n=N(K)/N_A(K)$.
 
We shall end the section with the following result describing the visibility map over certain $p$-adic Lie extensions of a number field.  
Let $K/L$ be a finite Galois extension of number fields and $K_\infty$ be a $p$-adic Lie pro $p$-extension of $K$. Let $G$ denote the Galois group of $K_\infty/K$. For a prime $w$ of $K_\infty$, let $G_w$ be the decomposition subgroup of $G$ associated to $w$. For an abelian variety $A$ defined over $L$, recall that  the Shafarevich-Tate group ${\cyr \Sha}(A/K_\infty)$ of $A$ over $\K$ is defined   as
\[ {\cyr \Sha}(A/K_\infty):= \ilim_{K} {\cyr \Sha}(A/K) \]   
where $K$ varies over the set of number fields $K\subset K_\infty$ containing $L$. The direct limit is taken with respect to the natural restriction maps of Galois cohomology groups.   The visible subgroup of ${\cyr \Sha}(A/\K)$ with respect to $i$ is defined as 
\[  \Vis_J({\cyr \Sha}(A/\K)) := {\cyr \Sha}(A/\K)\cap \Vis_J(\K,A) = ker ({\cyr \Sha}(A/\K)\lrta {\cyr \Sha}(J/\K) )\]

\begin{lemma}\label{vis1}
Let $A$ and $B$ be abelian subvarieties of an abelian variety $J$ defined over $L$ such that $A\cap B$ is finite and $B[n] \subset A\cap B$. 
Let $K$ be a finite extension of $L$ such that 
$(J/B)(K)[p]=0$ and $B(K)[p]=0$. Then 
there is a natural map
\[   \phi_{\K}  :  B(\K) / pB(\K)  \lrta \Vis_J(\K,A)  \]
such that $ker(\phi) \subset J(\K)/(B(\K) + A(\K) ) $. If $A(\K)$ is finite then $\phi$ is an injective map. 
\end{lemma}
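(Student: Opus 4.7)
The plan is to reduce to Lemma \ref{vis} by writing $\K$ as the filtered union of its finite Galois subextensions $F/K$ and passing to the direct limit. The only additional ingredient is a pro-$p$ descent verifying that the torsion hypotheses of Lemma \ref{vis} propagate from $K$ to each such $F$.

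First I would check that for every finite Galois subextension $K \subseteq F \subseteq \K$ one still has $B(F)[p] = 0$ and $(J/B)(F)[p] = 0$. Since $\K/K$ is pro-$p$, the group $H = \gal(F/K)$ is a finite $p$-group acting on the finite $\FF_p$-vector space $V = B(F)[p]$; the class equation gives $|V^H| \equiv |V| \pmod{p}$, so if $V \neq 0$ then $V^H = B(K)[p]$ would be nonzero, contradicting the hypothesis. The same argument handles $(J/B)(F)[p]$.

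Having verified the hypotheses of Lemma \ref{vis} at every finite layer, I obtain a system of maps
\[
\phi_F : B(F)/pB(F) \lrta \Vis_J(F, A), \qquad \ker(\phi_F) \subseteq J(F)/(A(F) + B(F)),
\]
each injective when $A(F)$ has Mordell--Weil rank zero. Functoriality in $F$ is immediate from the cocycle construction of $\phi_F$: for $b \in B(F)$, choose $b' \in B(\bar L)$ with $pb' = b$ and form the class of $\sigma \mapsto \sigma(b') - b'$; passing to a larger field $F'$ just restricts this cocycle. Taking $\ilim$ over $F$, and using that filtered direct limits are exact and commute both with Galois cohomology of abelian varieties and with tensoring against $\ZZ/p\ZZ$, I obtain
\[
\phi_{\K} : B(\K)/pB(\K) \lrta \Vis_J(\K, A), \qquad \ker(\phi_{\K}) \subseteq J(\K)/(A(\K) + B(\K)).
\]
If $A(\K)$ is finite, then $A(F) \subseteq A(\K)$ is finite for every $F$, so every $\phi_F$ is injective; a filtered colimit of injections is an injection, hence $\phi_{\K}$ is injective.

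The only real obstacle is the pro-$p$ descent in the first step; after that, everything is formal direct-limit bookkeeping. The one small point to check is that the construction of $\phi_F$ really is natural in $F$ so that the direct system is well defined, but this is transparent from the cocycle description above.
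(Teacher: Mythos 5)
Your proposal is correct and follows essentially the same route as the paper: verify that the torsion hypotheses of Lemma \ref{vis} persist along the pro-$p$ tower (the paper phrases your class-equation/fixed-point argument as ``Nakayama Lemma''), obtain the finite-level maps $\phi_F$, and pass to the filtered colimit, using that a filtered colimit of injections is injective for the last assertion. The only cosmetic difference is that the paper first deduces $B(\K)[p]=0$ and $(J/B)(\K)[p]=0$ at the top of the tower and then specializes to each finite $F$, whereas you verify the vanishing directly at each finite layer; these are logically identical.
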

\begin{proof}
Since $A(K)[p]=0$ and $\K/K$ is pro-$p$ extension of $K$, by Nakayama Lemma we get that $A(\K)[p]=0$. Similarly $(J/C)(\K)[p]=0$. In particular for every extension $F$ of $K$ such that $F\subset \K$, from Lemma \ref{vis} we have a natural map
\[     B(F) / pB(F)  \stackrel{\phi_F}{\lrta} \Vis_J(F,A)  \]
which is injective if $A(F)=0$.  
Now the lemma follows taking the direct limit of $\phi_F$ over natural restriction maps of Galois cohomology groups as $F$ varies over Galois extension $F\subset \K$ of $K$. 
\end{proof}
  
\begin{theorem}\label{Lie}
Let $A$ and $B$ be abelian sub varieties of an abelian variety $J$ defined over $L$ such that Assumption \ref{assumption} holds for $n=p$.  Let  $v|N$ be a prime of $K$ such that  the Tamagawa number of either $A$ or $B$ is not a $p$-adic unit at $v$ over a finite extension of $K$ in $\K$.  Assume that the dimension of $G_w$ as a $p$-adic Lie group is $\geq 2$ for every prime $w|v$ of $\K$ .\\
Then the natural map $\phi_{\K} : B(K_\infty)/pB(K_\infty) \lrta  \Vis_J(K_\infty,A)$ factors through 
\[  \phi_{\K} : B(K_\infty)/pB(K_\infty)  \lrta \Vis_J{\cyr \Sha}(A/K_\infty) \]
such that $ker(\phi)\subset J(K_\infty)/(B(K_\infty)+A(K_\infty))$. In particular, $\phi_{\K}$ is injective if $A(\K)$ is a finite group. 
\end{theorem}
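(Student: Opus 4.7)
The plan is to mimic the proof of Theorem \ref{exten} but at the level of the $p$-adic Lie extension $\K$, handling the new case of primes with non $p$-adic-unit Tamagawa numbers via the hypothesis on $\dim G_w$. To begin, I would invoke Lemma \ref{vis1} directly: Assumption \ref{assumption} for $n=p$ gives $B(K)[p] = 0 = (J/B)(K)[p]$, and Nakayama propagates this to $\K$, so Lemma \ref{vis1} produces the homomorphism $\phi_{\K} : B(\K)/pB(\K) \to \Vis_J(\K,A)$ with the desired kernel bound and the asserted injectivity when $A(\K)$ is finite. It then remains only to upgrade ``visible in $H^1(\K,A)$'' to ``visible in ${\cyr \Sha}(A/\K)$,'' i.e., to check that for every $x \in B(\K)$ and every prime $w$ of $\K$ the restriction $\kappa_w(\phi_{\K}(x)) \in H^1(\K_w, A)$ vanishes.

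Writing $\K = \bigcup F$ over finite Galois subextensions $F/K$, one has $\phi_{\K}(x) = \phi_F(y)$ for some $F$ and some $y \in B(F)$; set $w' = w \cap F$. I would then run through the primes case by case, exactly parallel to Theorem \ref{exten}. (i) If $w$ is archimedean the image has order dividing $2$, but $\phi_F(y)$ is $p$-torsion with $p$ odd, so it vanishes. (ii) If $w \mid p$, Lemma \ref{smooth1} applies because $A,B,J$ are defined over $L$ with $e_p(L) < p-1$, and the exact sequence $0 \to A(\K_w^{ur}) \to J(\K_w^{ur}) \to C(\K_w^{ur}) \to 0$ forces the local image to vanish, as in \cite[Theorem 3.1]{AS}. (iii) If $w \nmid pN$ then both $A$ and $B$ have good reduction at $w'$ and their Tamagawa numbers are automatically $p$-adic units, so again \cite[Theorem 3.1]{AS} kills the class already at $H^1(F_{w'}, A)$. (iv) If $v = w \cap K$ divides $N$ but $c_{v,A}(K) c_{v,B}(K)$ is a $p$-adic unit, the same Tate-style argument eliminates the local class at finite level.

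The only genuinely new case, and the main obstacle, is (v): $v \mid N$ with $c_{v,A}(K) c_{v,B}(K)$ \emph{not} a $p$-adic unit. At the finite level $F$ the class $\kappa_{w'}(\phi_F(y))$ need not be zero, because the Tate argument breaks down precisely when the $p$-part of the component group is nontrivial. The point of the hypothesis $\dim G_w \geq 2$ is to guarantee that this obstruction is annihilated upon passage to $\K_w$: the finite $p$-torsion subgroup of $H^1(F_{w'}, A)$ maps to zero in $H^1(\K_w, A)$. I would prove this by showing that $H^1(\K_w, A)[p^\infty] = 0$ for such Lie-extensions $\K_w/F_{w'}$, using local duality (the Pontryagin dual is $\varprojlim \widehat{A}(E)$ over finite $E \subset \K_w$) together with a Nakayama argument that forces the dual to have trivial $\Lambda(G_w)$-corank once $\dim G_w \geq 2$; equivalently one can argue directly that the finite cyclic image of $\kappa_{w'}(\phi_F(y))$ dies in a sufficiently large Lie extension because $(\K_w)^\times \otimes \ZZ_p$ is $p$-divisible in the relevant degree.

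Once the vanishing at every $w$ is established, the image of $\phi_{\K}$ lies inside $\Vis_J({\cyr \Sha}(A/\K))$, completing the factorization. The kernel bound and the injectivity statement when $A(\K)$ is finite are inherited verbatim from Lemma \ref{vis1}, so no further work is needed beyond the local analysis. I expect the crux of the write-up, and the bulk of any referee's scrutiny, to concentrate on case (v) and the precise invocation of local duality for abelian varieties over $p$-adic Lie extensions with $\dim G_w \geq 2$.
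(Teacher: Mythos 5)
Your approach matches the paper's: start from Lemma \ref{vis1}, then verify local vanishing of $\kappa_w\phi_{\K}(x)$ prime by prime, with the genuinely new case being primes $v\mid N$ at which the Tamagawa numbers fail to be $p$-adic units. For that case, the vanishing $H^1(K_{\infty,w},A)[p]=0$ you want to establish via local Tate duality together with a Nakayama/Iwasawa-rank argument once $\dim G_w \ge 2$ is exactly what the paper obtains by citing Ochi--Venjakob \cite[Lemma 5.4(ii)]{OV} (combined with the irreducibility in Assumption \ref{assumption}(d) and Kummer theory to pass from $A[p]$-coefficients to $A$-coefficients), so the mathematical content is the same and you would simply be re-deriving the cited lemma. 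One caution: your ``equivalently'' aside about $(\K_w)^\times \otimes \ZZ_p$ being $p$-divisible is a $\mathbb{G}_m$-Kummer statement and is not the right mechanism for abelian-variety cohomology, so drop it and keep the local-duality argument. Your explicit separation of $w\mid p$ with an invocation of Lemma \ref{smooth1} is a sound, and in fact slightly more careful, unpacking of the paper's terse reference to the proof of \cite[Theorem 3.1]{AS} in that case.
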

\begin{proof}
For a prime $w$ of $K_\infty$, let $ \kappa_w$ be the natural restriction map 
\[  H^1(K_\infty,A) \lrta H^1(K_{\infty,w},A).  \]
Here $K_{\infty,w}$ denotes the union of $K'_w$ in a fixed algebraic closure $\bar{K_w}$ of $K_w$, $K'$ varies over the finite extensions of $K$ in $K_\infty$ and $K'_w$ denotes the completion of $K'$ at  $w$. 
Let $x\in B(K_\infty)$.  To show that $\phi(x)\in Vis_J{\cyr \Sha}(A/K_\infty)$, it is enough to show that $\kappa_w(\phi(x))=0$ for every prime $w$ of $K_\infty$.  

We first consider the case when $w$ is an Archimedean  prime. Note that from Lemma \ref{vis} we have that $p$ is divisible by the order of $\phi(x)$. Since $w$ is an Archimedean prime, we get that $\kappa_w(\phi(x))$ is annihilated by $2$. Since $p$ is an odd prime, we concluded that $\kappa_w(\phi(x))=0$. Let $v$ be a finite prime of $K$. Suppose that the Tamagawa number of either $A$ or $B$ is not a $p$-adic unit over an extension of $K$ in $\K$. Then from the assumption of the theorem, for every prime $w$ of $\K$, the dimension of $G_w$ is at least $2$.   In this case, it follows from \cite[Lemma 5.4(ii)]{OV} and assumption (d)  that $H^1(K_{\infty,w},E[p])=0$. From Kummer theory for abelian varieties,  we have a surjective homomorphism from $H^1(K_{\infty,w},E[p])\lrta H^1(K_{\infty,w},A)[p]$. This implies that  $H^1(K_{\infty,w},A)[p]=0$. Since $p$ is divisible by the order of $\phi(x)$, we get that $\kappa_w(\phi(x))\in H^1(K_{\infty,w},A)[p]$ which implies  $\kappa_w(\phi(x))=0$.

Next we consider the case when the Tamagawa numbers of both $A$ and $B$ are $p$-adic units at $v$ over every extension of $K$ in $\K$. Choose a finite extension $M$ of $K$ in $\K$ such the $x$ is defined over $M$. Then from the proof of \cite[Theorem 3.1]{AS}, we get that the image of $x$ in $H^1(M_w,A)$ is zero. In particular, this implies that the image of $\phi(x)$ in $H^1(K_{\infty,w},A)$ is zero. 
\end{proof}

Let $v$ be a finite prime of $K$ and $F$ be a finite $p$-extension of $K$ and $w|v$ be a prime of $F$. Let $K^{ur}_v$ (resp $F^{ur}_w$) be the maximal unramified extension of $K_v$ (resp $F_w$).  Suppose that $F_w/K_v$ is unramified. We have a surjective map from $H^1(F_w^{ur}/K_w,A(F^{ur})) \lrta H^1(F_w^{ur}/F_w,A(F^{ur}))^{\gal(F_w/K_v)}$.  Suppose that the Tamagawa number of an abelian variety $A$ over $K_v$ is a $p$-adic unit. Then the cardinality of  the group $H^1(K_w^{ur}/K_w,A(K^{ur}))$ is a $p$-adic unit. Since $F_w/K_v$ is unramified, we have that $F_w^{ur}=K_v^{ur}$. This implies that the cardinality of  $H^1(F_w^{ur}/F_w,A(F^{ur}))^{\gal(F_w/K_v)}$ is a $p$-adic unit. Since ${\gal(F_w/K_v)}$ is a $p$-group, by Nakayama lemma we get that the cardinality of  $H^1(F_w^{ur}/F_w,A(F^{ur}))$ is a $p$-adic unit. Thus, we have that if $F_w/K_v$ is unramified $p$-extension and the Tamagawa number of $A$ over $K_v$ is a $p$-adic unit then  it is a $p$-adic unit over $F_w$ also. 

\begin{example}\label{ex5}
{ \normalfont
Let $E_1$ and $E_2$ be the pair of elliptic curves defined by the equations
\[  E_1 :  y^2=x^3+x^2 - 30008176 x - 63229110828 \]
and 
\[ E_2 :   y^2=x^3 + x^2 - 144 x + 532 \]
respectively over $\QQ$. The conductor of $E_1$ and $E_2$ over $\QQ$ is $3536=2^4\times 13 \times 17$ . We have that $E_1[3]\cong E_2[3]$ as $\gal(\bar{\QQ}/\QQ)$-modules and are irreducible.   Let $\Q_{cya}$ denote the cyclotomic $\ZZ_p$-extension of $\QQ$. Both $E_1$ and $E_2$ has good reduction at $p$.  The Tamagawa numbers of $E_1$ and $E_2$ are $3$-adic unit at every finite prime of $\QQ$. Since $\QQ_{cyc}/\QQ$ is unramified at every prime not dividing $3$, for every extension $F/\QQ$ in $\QQ_{cya}$, the Tamagawa number of $E_1$ and $E_2$ at every finite prime of $F$  is a $3$-adic unit. From Theorem \ref{Lie} we get that the natural map  
$\phi_{\QQ_{cyc}} : B(\QQ_{cyc})/pB(\QQ_{cyc}) \lrta  \Vis_J(\QQ_{cyc},A)$ factors through 
\[  \phi_{\K} : B(\QQ_{cyc})/pB(\QQ_{cyc})  \lrta \Vis_J{\cyr \Sha}(A/\QQ_{cyc}) .\]
In fact, in this case it can be shown that the Mordell-Weil Iwasawa $\lambda$-invariant of $E_1$ is zero and the Mordell-Weil $\lambda$-invariant of  $E_2$ is two two. This in particular, implies that   $E_1(\QQ_{cyc})$ is a finite group and $E_2(\QQ_{cyc})$ is a finite generated abelian group of rank $2$ and therefore $\phi_{\QQ_{cyc}}$ is non-zero injective map. We mention that to compute the Mordell-Weil Iwasawa Lambda invariant of $E_1$ and $E_2$ one need to assume that the Birch and Swinnerton Dyer conjecture for both $E_1$ and $E_2$ are valid over every extension of $\QQ$ in $\QQ_{cyc}$. For the definition and a large data base on Mordell-Weil Iwasawa $\lambda$-invariant of  an elliptic curve we refer the reader to the link \cite{P}. 
}
\end{example}

Now consider an abelian variety $J$ and  a pair of  abelian sub varieties   $A$ and $B$ of $J$ over $\QQ$ with good  reduction at primes dividing $p$. Suppose that $A$, $B$ and $J$ satisfy Assumption  \ref{assumption}. Put $K=\QQ(\mu_p)$ and let $m$ be the product of primes number $q$ such that the Tamagawa number of either $A$ or $B$ over $K$ at primes dividing $q$ is not a $p$-adic unit. Put $\K:= \cup_rK(\mu_{p^r}, (pm)^{1/p^r})$ where $r$ varies over the set of positive integers. The $p$-adic Lie extension $\K$ is unramified outside the primes of $K$ dividing $mp$ and the  dimension of the decomposition subgroup $G_w$ of $\gal(\K/K)$ at every prime $w$ of $\K$ dividing $mp$ is $2$. Thus $\K$ satisfies all the assumptions of Theorem \ref{Lie}. Therefore we get that  the natural map $\phi_{\K} : B(K_\infty)/pB(L_\infty) \lrta  \Vis_J(K_\infty,A)$ factors through 
\[  \phi_{\K} : B(K_\infty)/pB(K_\infty)  \lrta \Vis_J{\cyr \Sha}(A/K_\infty) \]
such that $ker(\phi)\subset J(K_\infty)/(B(K_\infty)+A(K_\infty))$. 
At present we do not know in general  how to determine if the map $\phi_{\K}$ is non-trivial. Using the techniques from Iwasawa theory we hope to answer this question in our future  work.

\end{document}